\newcommand{\emor}{{\rm End} }
\newcommand{\indice}{r}
\newcommand{\dimV}{d}
\newcommand{\qe}{\mathbb{Q}}
\newcommand{\ze}{\mathbb{Z}}
\newtheorem{thm}{Theorem}[section]
\newtheorem{con}{Conjecture}[section]
\newtheorem{propo}{Proposition}[section]
\newtheorem{lem}[propo]{Lemma}
\newtheorem{D}[propo]{Definition}
\newtheorem*{propa*}{Proposition B}
\newtheorem*{propb*}{Proposition A}
\newtheorem*{proprietas}{Property $(S^n)$}
\newtheorem{example}{Example}[section]
\title[{The optimality of the Bounded Height Conjecture } ]
{The optimality of the bounded Height Conjecture}
\author[{ Evelina Viada}]{  
 }
\begin{document}

\maketitle
\centerline{Evelina Viada\footnote{Evelina Viada,
     Universit\'e de Friboug Suisse, P\'erolles, D\'epartement de Math\'ematiques, Chemin du Mus\'ee 23, CH-1700 Fribourg,
Switzerland,
    evelina.viada@unifr.ch.}
    \footnote{Supported by the SNF (Swiss National Science Foundation).}
\footnote{Mathematics Subject classification (2000): 11G50, 14H52, 14K12.\\
Key words: Height, Elliptic curves, Subvarieties.}}

In this article we show that the Bounded Height Conjecture is optimal in the sense that, If $V$ is  an irreducible variety  in a power of an elliptic curve with empty deprived set, then all open subsets of $V$ do not have bounded height. 
The Bounded Height Conjecture is known to hold.
We also present    some examples and remarks.

\section{introduction}

This work  concerns principally the optimality of  the Bounded Height Conjecture, stated by Bombieri, Masser and Zannier \cite{BMZ1} and proven by Habegger \cite{Phil1}.  In section 2, we clarify the assumption on the varieties,
understanding such a hypothesis geometrically and from different points of view.  We  give some examples, to make sure that certain situations can occur.
In section 3, we prove the optimality of the Bounded Height Conjecture. 
In the final section  we present some further remarks and  possible open questions.

Denote by  $A$  an abelian variety    over $\overline{\qe}$  of
dimension $g$. 
Consider on $A(\overline\qe)$ a canonical height function. Denote by $||\cdot||$ the induced semi-norm.
For $\varepsilon \ge 0$, we denote $$\mathcal{O}_{\varepsilon}=\{ \xi \in A(\overline{\qe}) : ||\xi|| \le \varepsilon\}.$$

Consider a proper irreducible algebraic  subvariety   $V$ of dimension $d$ embedded in $A$, defined over $\overline{\qe}$. 
We say that: 
\begin{itemize}
 \item[-] $V$ is transverse, if $V$ is not contained in any
translate of a proper algebraic subgroup of $A$.

\item[-]  $V$ is
weak-transverse, if $V$ is not contained in  any proper algebraic
subgroup of $A$.
\end{itemize}

Given  an integer $ \indice$ with $1 \le \indice \le g$ and  a subset $F$ of $A(\overline{\qe})$, we define the set
$$S_{\indice}(V,F)= V(\overline{\qe})\cap  \bigcup_{\mathrm{cod}B \ge \indice} B+F$$
where $B$ varies over all abelian subvarieties of $A$ of
codimension at least $\indice$ and $$B+F=\{b+f \,\,\,: \,\,\,b\in B, \,\,\,f\in F\}.$$ 
Note that $$S_{r+1}(V,F)\subset S_{r}(V,F).$$
We  denote the set $S_{\indice}(V, A_{\rm Tor})$
simply  by $S_{\indice}(V)$, where $A_{\rm Tor}$ is the torsion of $A$.
For a subset  $V' \subset V$, we denote
$$S_r(V',F)=V' \cap S_r(V,F).$$
It is  natural  to ask:  `For which sets $F$ and integers
$\indice$, has the set 
 $S_{\indice}(V,F)$ bounded height or is it non-Zariski    dense in $V$?'
 
Sets of this kind, for $\indice=g$, appear in the literature in the
context of the Mordell-Lang, of the Manin-Mumford and of the Bogomolov Conjectures.
More recently  Bombieri, Masser and Zannier \cite{BMZ}  have proven that:

For a transverse
 curve $C$ in a torus,
 \begin{enumerate}
 \item The set $S_1(C)$ has bounded height, 
 \item The set $S_2(C)$ is finite. 
 \end{enumerate}
 They
investigate  for the first time,  intersections with the union of all
algebraic subgroups of a given codimension. This opens a vast
number of conjectures for subvarieties of semi-abelian varieties. 

Most naively, one could risk the following:

For $V$ a transverse subvariety of $A$,
\begin{enumerate}
\item $S_d(V)$ has bounded height,
\item $S_{d+1}(V)$ is non-Zariski dense in $V$.
\end{enumerate}
We will show that i. is a too optimistic guess.

Several problems rise for varieties. 
 A proper Zariski closed subset of a curve has bounded height. 
In general,  a proper Zariski closed subset of a variety does not have   bounded height, however it is still a `small' set. So one shall say, that outside an anomalous Zariski closed subset of $V$, the points we consider have bounded height.
 Bombieri, Masser and Zannier  introduced the anomalous set. Hardest is to show that it is closed.
 \begin{D} [\cite{BMZ1} Definition 1.1 and 1.2]
 \label{defanom}
An irreducible subvariety $X$ of $V$ is anomalous if it has positive dimension and lies in a coset $H$ of $A$ satisfying $$\dim H\le n-\dim V+\dim X-1.$$ The deprived set $V^{oa}$ is what remains of $V$ after removing all anomalous subvarieties.
\end{D}
For tori, they prove
\begin{thm}[\cite{BMZ1} Theorem 1.4.]
\label{chiuso}
The deprived set $V^{oa}$ is a Zariski open  of $V$. 
\end{thm}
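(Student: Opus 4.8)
The plan is to show that the union $V^{\mathrm{an}}$ of all anomalous subvarieties of $V$ is Zariski closed in $V$; the theorem is precisely the resulting openness of $V^{\mathrm{oa}}=V\setminus V^{\mathrm{an}}$. The first step, essentially formal, is a clean description of $V^{\mathrm{an}}$. For a connected algebraic subgroup $H\subseteq A$ of codimension $r=\mathrm{cod}\,H$, restrict the quotient $\pi_H\colon A\to A/H$ to $V$; the fibre of $\pi_H|_V$ through $v\in V$ is $V\cap(v+H)$. By the upper semicontinuity of the fibre dimension of a morphism (Chevalley), the set
\[
Z_H=\Bigl\{\,v\in V:\ \dim_v\bigl(V\cap(v+H)\bigr)\ \ge\ \max\bigl(1,\ \dim V-r+1\bigr)\,\Bigr\}
\]
is Zariski closed in $V$. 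If $v\in Z_H$, the component of $V\cap(v+H)$ through $v$ has dimension $\ge\max(1,\dim V-r+1)$ and lies in the coset $v+H$, hence is an anomalous subvariety through $v$. Conversely, if $X$ is anomalous and $H$ is the connected subgroup of its minimal coset, then already $\dim X\ge\dim V-\mathrm{cod}\,H+1$, so the component of $V\cap(x+H)$ through any $v\in X$ has dimension $\ge\dim X$ and $v\in Z_H$. Hence
\[
V^{\mathrm{an}}=\bigcup_{H}Z_H,
\]
the union ranging over all connected algebraic subgroups of $A$.

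The whole difficulty is that this a priori infinite union of closed sets is actually finite. I would first dispose of the degenerate cases: if $Z_H=V$ for some $H$ — for instance if $V$ is itself anomalous (in particular if $V$ is not transverse and $\dim V\ge1$), or if the generic fibre of $\pi_H|_V$ already has dimension $\ge\max(1,\dim V-r+1)$ — then $V^{\mathrm{an}}=V$, $V^{\mathrm{oa}}=\emptyset$, and we are done. So assume each $Z_H$ is a proper closed subset. It then suffices to control the \emph{maximal} anomalous subvarieties: such an $X$ is a component of $V\cap(x+H_X)$ for its minimal direction $H_X$, and every point of $X$ lies in $Z_{H_X}$; moreover every point of every $Z_H$ lies on some maximal anomalous subvariety, so $V^{\mathrm{an}}=\bigcup_{H}Z_H$ over the set of subgroups $H$ occurring as minimal directions of maximal anomalous subvarieties. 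Since $\dim X$ takes only the finitely many values $1,\dots,\dim V-1$, it is enough to prove, for each fixed $e$, that only finitely many subgroups $H$ arise as the minimal direction of a maximal anomalous subvariety of dimension $e$.

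This finiteness is the heart of the matter and the step I expect to be the true obstacle, because the abelian subvarieties of $A$ are \emph{not} parametrized by a scheme of finite type — already $E^2$ contains infinitely many elliptic curves, of unbounded degree with respect to any polarization — so one cannot simply invoke Noetherianity of a parameter space. The plan is to pin down the relevant $H$ by the internal geometry of $V$: fixing a very ample line bundle $L$ on $A$, form for each $e$ the incidence correspondence of pairs $(v,\,x+H)$ with $v$ on a component of $V\cap(x+H)$ of dimension $e$ not lying in a smaller coset, and show — using Bézout-type bounds for the degree of $V\cap(x+H)$ in terms of $\deg_L V$, together with the rigidity of abelian subvarieties (a direction subgroup $H$ of a positive–dimensional subvariety of $V$ contracted by $\pi_H$ is determined by boundedly much data read off $V$) — that the directions $H$ that actually occur form a finite list $H_1,\dots,H_N$. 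Granting this, $V^{\mathrm{an}}=Z_{H_1}\cup\cdots\cup Z_{H_N}$ is a finite union of Zariski closed sets, hence Zariski closed, and $V^{\mathrm{oa}}$ is open. For subvarieties of a torus this delicate finiteness is what Bombieri, Masser and Zannier prove by a direct combinatorial study of the sublattices that can serve as character lattices of the quotients $A/H$, rather than by a moduli argument; in the abelian case one argues along the same lines.
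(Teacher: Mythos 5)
The paper does not prove this theorem; it is quoted as Theorem~1.4 of Bombieri--Masser--Zannier \cite{BMZ1} (their Structure Theorem for anomalous intersections in tori), so there is no internal proof to compare against.

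Your formal reduction is correct as far as it goes. Writing $V^{\mathrm{an}}=\bigcup_H Z_H$, with each
$$Z_H=\bigl\{v\in V:\dim_v\bigl(V\cap(v+H)\bigr)\ge\max(1,\dim V-\mathrm{cod}\,H+1)\bigr\}$$
closed by Chevalley upper semicontinuity of fibre dimension, is the standard opening move, and your verification that this union really is $V^{\mathrm{an}}$ (via minimal cosets of anomalous subvarieties in one direction, and the dimension inequality in the other) is sound. The reduction to finitely many \emph{relevant} subgroups $H$, namely those arising as minimal directions of maximal anomalous subvarieties of each fixed dimension $e$, is also the right target.

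The problem is that you then stop precisely where the theorem begins. The finiteness of the relevant $H$ is not a lemma on the way to the Structure Theorem: it essentially \emph{is} the Structure Theorem, and you acknowledge as much by calling it ``the heart of the matter'' and ``the true obstacle.'' What you offer in its place is a plan --- form an incidence correspondence, invoke B\'ezout-type degree bounds, appeal to ``rigidity of abelian subvarieties,'' and ``argue along the same lines'' as BMZ --- without executing any of it. In particular you never explain how degree bounds on $V\cap(x+H)$ actually bound the degree (or otherwise pin down the isogeny class) of the subgroup $H$ itself, which is exactly the point where the infinitude of abelian subvarieties bites. In the torus case BMZ's argument is a genuinely intricate combinatorial analysis of sublattices, and the abelian analogue (due to R\'emond) is likewise substantial; neither follows from the general nonsense you have set up. So the proposal is a correct framing with a gap of full theorem size at its centre: the decisive finiteness claim is asserted, not proved.
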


Then, they state the following conjecture for tori and $\varepsilon=0$.
\begin{con}[Bounded Height Conjecture]
\label{alt1}
Let $V$ be an irreducible variety in $A$ of dimension $d$. Then, there exists $\varepsilon>0$ such that  $S_{d}(V^{oa},\mathcal{O}_\varepsilon)$ has bounded height.

\end{con}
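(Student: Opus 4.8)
The plan would be to reproduce the argument of Habegger \cite{Phil1}, who established this conjecture (first for tori, then for abelian varieties); I sketch its architecture. First, if $V$ is not transverse then $V$ is itself anomalous, so $V^{oa}=\emptyset$ and there is nothing to prove; assume henceforth $V$ transverse. By Theorem~\ref{chiuso}, whose abelian analogue holds as well, $V^{oa}$ is Zariski open, so $V\setminus V^{oa}$ is a finite union of proper irreducible subvarieties of $A$, each of dimension $<d$. Since $S_{r+1}(\cdot,F)\subseteq S_r(\cdot,F)$, for $W\subseteq V$ with $\dim W<d$ one has $S_d(W,\mathcal{O}_\varepsilon)\subseteq S_{\dim W}(W,\mathcal{O}_\varepsilon)$; hence an induction on $\dim V$ — applied to the components of $V\setminus V^{oa}$, then to their deprived complements, taking the minimum of the finitely many $\varepsilon$'s that arise — reduces the conjecture to the claim that \emph{there exist $\varepsilon>0$ and $C(V)$ such that every $P\in V^{oa}$ lying in $B+\mathcal{O}_\varepsilon$ for some abelian subvariety $B\subseteq A$ with $\operatorname{cod}B\ge d$ satisfies $||P||\le C(V)$}. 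Replacing $B$ by a larger subgroup — over a power of an elliptic curve this is always possible — one may also take $\operatorname{cod}B=d$.

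Next I would unpack what $P\in V^{oa}$ gives. Let $\pi_B\colon A\to A/B$, so $\dim(A/B)=d$. If $\dim\pi_B(V)<d$, then every fibre of $\pi_B|_V$ has dimension $\ge 1$, and a component $X$ of the fibre through $P$ is a positive-dimensional subvariety of $V$ contained in the coset $H=\pi_B^{-1}(\pi_B(P))$ of $B$, with $\dim H=g-d\le g-d+\dim X-1$; that is, $X$ is anomalous (Definition~\ref{defanom}, $n=\dim A=g$), contradicting $P\in V^{oa}$. So $\pi_B|_V$ is dominant with finite general fibre, and $\pi_B(V)$, being a quotient of the transverse $V$, is transverse in $A/B$. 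Writing $P=b+\xi$ with $b\in B$ and $||\xi||\le\varepsilon$, we get $\pi_B(P)=\pi_B(\xi)$, a point of $A/B$ of semi-norm $\le c(B)\,\varepsilon$, where $c(B)$ is the complexity of $B$ — essentially the operator norm of $\pi_B$ in the fixed projective embedding; over $A=E^N$ the subgroup $B$ is the identity component of the kernel of a matrix over $\operatorname{End}(E)$ and $c(B)$ is a fixed power of that matrix's norm.

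The crux — and the genuine content of the theorem — is that there are infinitely many admissible $B$ and $c(B)$ is unbounded, so $||\pi_B(P)||\le c(B)\varepsilon$ is vacuous without control of $c(B)$. Here I would argue as Bombieri, Masser and Zannier do for curves \cite{BMZ}: suppose for contradiction a sequence $P_j\in V^{oa}$ with $||P_j||\to\infty$, each witnessed by a subgroup $B_j$; invoke a geometry-of-numbers input — Minkowski's second theorem, or a Siegel-type lemma over $\operatorname{End}(E)$ when $A=E^N$ — to replace $B_j$ by an admissible codimension-$d$ subgroup whose complexity is bounded polynomially in $||P_j||$, at the cost of enlarging $\varepsilon$ by a controlled factor; and then confront the resulting nearly-trivial point $\pi_{B_j}(P_j)$ with the arithmetic of $V$. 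An effective height lower bound for algebraic points of $V$ away from a fixed proper subvariety — a Liouville/Lehmer-type inequality on $V$, equivalently an effective Bogomolov-Zhang bound for the essential minima of the transverse subvarieties that occur — then forces $||P_j||$ bounded, a contradiction. The truly degenerate configurations, in which one has slipped onto a proper subvariety of $V$ (e.g. a witnessing subgroup of codimension $>d$ with $\pi_B|_V$ not dominant onto $A/B$), lie in the anomalous strata and are absorbed by the induction of the first step. The substance of \cite{Phil1} is to make all of this uniform, so that the bound holds on the \emph{open} set $V^{oa}$ and not merely on a Zariski-dense subset.
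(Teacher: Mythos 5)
The statement you were asked to prove is a \emph{conjecture} in the paper, not a result the paper establishes. Conjecture~\ref{alt1} is recorded as an open statement of Bombieri, Masser and Zannier, and the paper immediately cites Habegger's preprint \cite{Phil1} as Theorem~\ref{phill} to note that it holds; no proof is reproduced in the paper. The paper's own contribution in this direction is the converse optimality result, Theorem~\ref{sottimale}, together with the equivalences in Lemmas~\ref{gsplit}, \ref{equi} and \ref{ps}. There is therefore no in-paper argument against which your sketch can be compared.

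As an outline of Habegger's external proof, moreover, your sketch is not faithful to the central step. The machinery you invoke at the crux — Minkowski/Siegel to descend to subgroups of controlled complexity, then an effective Bogomolov/Lehmer lower bound on the transverse image to force boundedness — is the Bombieri--Masser--Zannier strategy for \emph{curves} in tori \cite{BMZ}, and it is well known not to extend directly to higher-dimensional $V$. Habegger's proof rests instead on an intersection-theoretic degree inequality: for the relevant family of surjections $\phi\colon A\to A'$ with $\dim A'=d$ and an ample $L$ on $A'$, the restriction of $\phi^{*}L$ to $V$ remains ``big'' uniformly in $\phi$ precisely because $V^{oa}\neq\emptyset$ (equivalently Property $(S)$, cf.\ Lemma~\ref{equi}), and this degree bound is then converted into a height bound via the quadraticity of the N\'eron--Tate form and a comparison of $\|\phi(P)\|$ with $\|P\|$. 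No Bogomolov-type lower bound enters. A smaller point: your opening induction over the components of $V\setminus V^{oa}$ is both unnecessary — the conjecture asserts boundedness only on $V^{oa}$ and makes no claim on the complement — and, as set up, would not terminate, since each application controls only the deprived open subset of the subvariety under consideration, leaving a new anomalous locus untreated.
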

 We remark that in all known effective proofs, the bound for the height of $S_d(V^{oa})$ is independent of the field of definition of $V$. Then, a set $F$ of bounded height does not harm.

 For transverse curves in a torus \cite{BMZ} and in a product of elliptic curves \cite{V}, Conjecture \ref{alt1} is effectively proven. In a preprint P. Habegger \cite{Phil1}  deals with subvarieties  of an abelian variety $A$  defined over the algebraic numbers. He shows:
 
 \begin{thm} [Habegger \cite{Phil1}]
 \label{phill}
 For $V$  an irreducible subvariety of $ A$, Conjecture \ref{alt1} holds.
 \end{thm}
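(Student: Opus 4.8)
I would follow the strategy of Habegger \cite{Phil1}, which combines a geometric reduction with an arithmetic argument extending the determinant method of Bombieri--Masser--Zannier \cite{BMZ} for curves in tori — and of Viada \cite{V} for curves in products of elliptic curves — to subvarieties of arbitrary dimension. Two preliminary reductions come first. Since in every effective treatment the height bound does not depend on the field of definition, one may assume $V$ is defined over a number field. One then needs the analogue for abelian varieties of Theorem~\ref{chiuso}, namely that $V^{oa}$ is a \emph{Zariski open} subset of $V$; as the text stresses, the delicate point is the closedness of the complement, which over an abelian variety rests on an Ax--Lindemann/Ax--Schanuel type description of how $V$ can meet cosets of algebraic subgroups. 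Granting this, it suffices to bound $\|P\|$ for $P\in V^{oa}\cap(B+\mathcal{O}_\varepsilon)$, uniformly over abelian subvarieties $B$ of $A$ with $\mathrm{cod}\,B\ge d$; enlarging $B$ — which is legitimate up to an isogeny and alters heights only by bounded factors — one may assume $\mathrm{cod}\,B=d$, so that the quotient $\pi_B\colon A\to A/B$ has $\dim(A/B)=d=\dim V$.

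The geometry of the deprived set enters through a single observation: for $P\in V^{oa}$ and $B$ as above, the intersection $V\cap(P+B)$ is zero-dimensional at $P$. Indeed, a positive-dimensional component $Y\ni P$ would lie in the coset $P+B$ of dimension $\dim B=g-d$; since $\dim Y\ge1$ this gives $\dim B\le g-\dim V+\dim Y-1$, so $Y$ is anomalous in the sense of Definition~\ref{defanom} and hence disjoint from $V^{oa}$, contradicting $P\in V^{oa}\cap Y$. Equivalently $T_PV\cap T_0B=0$, so $\pi_B$ is unramified on $V$ at $P$ and $\pi_B|_V$ is generically finite and dominant onto $A/B$.

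For the arithmetic core, write $A(\co)=\co^g/\Lambda$ and let $W\subset\co^g$ be the $(g-d)$-dimensional subspace with $B(\co)=W/(W\cap\Lambda)$. The hypothesis $P\in B+\mathcal{O}_\varepsilon$ says that, evaluated at a lift of $P$, the $d$ linear forms spanning the annihilator of $W$ (taken in the dual lattice) are simultaneously small modulo periods. Because $\dim V=d$ and, by the previous step, $V$ is not locally contained near $P$ in any translate of $B$, one forms an auxiliary interpolation determinant out of a basis of sections of a suitable ample line bundle on $V$ evaluated against a local parametrization of $V$: the $d$ approximate relations force this determinant to be extremely small, while the transversality $T_PV\cap T_0B=0$ makes it a nonzero algebraic number of controlled house and denominator. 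A Liouville-type inequality then yields $\|P\|\le c(V)$ once $\varepsilon$ is small enough, and, as in the curve case, larger coefficients in the relations attached to $B$ only sharpen the smallness instead of spoiling the estimate. One can also argue by contradiction and compactness: a sequence $P_n\in V^{oa}$ with $P_n\in B_n+\mathcal{O}_{\varepsilon_n}$, $\varepsilon_n\to0$ and $\|P_n\|\to\infty$ would, after passing to a limit of the subvarieties $B_n$ and invoking the equidistribution of points of small height, accumulate on a positive-dimensional subvariety of $V$ contained in a coset of dimension $\le g-d$ — an anomalous subvariety — which contradicts the choice of $V^{oa}$.

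The main obstacle is the \emph{uniformity in} $B$. There are infinitely many abelian subvarieties of codimension $d$ — already the graphs of multiplication by $n$ in $E^2$ — and a priori the constants produced by the quotient $\pi_B$, by bad reduction of $\pi_B|_V$, and by the locus where $\pi_B|_V$ fails to be finite all depend on $B$. Replacing that dependence by one constant depending only on $V$ is exactly where the geometry of the deprived set must be used quantitatively: it is what lets the auxiliary determinant be set up non-degenerately and uniformly, and it is the technical heart of the proof.
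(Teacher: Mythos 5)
This theorem is not proved in the paper; it is Habegger's result, stated here only with attribution to \cite{Phil1}. There is therefore no proof in the text to measure your sketch against. A few remarks all the same. Your geometric reduction is sound and lines up with how the present paper treats the hypothesis: the fact that for $P\in V^{oa}$ and a subgroup $B$ of codimension $d$ the map $\pi_B|_V$ is finite near $P$ is precisely the content encoded here by Property~$(S)$ and Lemma~\ref{equi}, and your derivation from Definition~\ref{defanom} is correct. Two caveats, though. First, the openness of $V^{oa}$ over an abelian variety is due to R\'emond (see \cite{Gprep}, also \cite{RHG2}), not to an Ax--Lindemann/Ax--Schanuel input as you suggest; the paper only cites the toric case (Theorem~\ref{chiuso}, from \cite{BMZ1}). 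Second, and more substantively, your description of the arithmetic core does not correspond to Habegger's method: \cite{Phil1} does not set up an auxiliary interpolation determinant and close with a Liouville-type inequality (that is the shape of the original curve arguments of \cite{BMZ} and \cite{V}), nor does it argue by compactness and equidistribution of small points. Habegger works instead with intersection-theoretic degree estimates for the surjections $\pi_B|_V$, uniform in $B$, combined with a comparison of N\'eron--Tate heights through $\pi_B$; the non-anomalous hypothesis is used quantitatively to keep the relevant degrees bounded below, and the smallness of $\|\pi_B(P)\|$ for $P\in B+\mathcal{O}_\varepsilon$ is then converted into a bound on $\|P\|$ via the degree control. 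You correctly identify uniformity in $B$ as the crux, but the mechanism that secures it is the degree/height comparison, not an auxiliary determinant.
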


 In the first instance we analyze
  several geometric properties which are different for varieties, but they all collapse to the transversal condition  for curves.
  \begin{proprietas}
    We say that $V$ satisfies  Property $(S^n)$ if,
 for all morphism $\phi: A \to A$ such that $\dim \phi(A)\ge d+n$,  $$\dim \phi(V)=\dimV.$$
 We simply say Property $(S)$ for $(S^0)$.
  \end{proprietas}
  In some sense Property $(S)$ is natural.   Property $(S^n)$ implies Property $(S^{n+1})$ and also implies transversality. For curves, transverse implies Property $(S)$.
 
 Habegger and R\'emond (see lemma \ref{equi}) show that property (S) is equivalent to the assumption $V^{oa}\not=\emptyset$. 
  Then, one can easily reformulate the Bonded Height Conjecture in terms of Property $(S)$,  avoiding the notion of deprived set.
      \begin{con} [Bounded Height Conjecture]
 \label{alt}
Let  $V$ be an  algebraic subvariety of $A$  defined over $\overline{\qe}$.  Suppose that $V$ satisfies Property $(S)$.
 Then, there exists  $\varepsilon>0$ and a non-empty open subset $V^e$ of $V$ 
  such that
  $S_{d}(V^{e},\mathcal{O}_\varepsilon)$  has bounded height.

 \end{con}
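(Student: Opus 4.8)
The plan is to derive Conjecture~\ref{alt} directly from the already established Conjecture~\ref{alt1} (Theorem~\ref{phill}), using as the bridge the Habegger--R\'emond equivalence recorded as Lemma~\ref{equi}: Property $(S)$ holds for $V$ if and only if its deprived set $V^{oa}$ is non-empty. In other words, the two formulations of the Bounded Height Conjecture are the same statement once that lemma and the closedness of the complement of the deprived set are granted, so there is nothing to prove beyond matching up the hypotheses and the conclusions.

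Concretely, I would argue as follows. Assume $V$ satisfies Property $(S)$. By Lemma~\ref{equi} the deprived set $V^{oa}$ is non-empty, and by Theorem~\ref{chiuso} it is a Zariski open subset of $V$; hence $V^{oa}$ is itself a legitimate candidate for the open set $V^e$ required in Conjecture~\ref{alt}. Now invoke Theorem~\ref{phill}: since Conjecture~\ref{alt1} holds, there is an $\varepsilon>0$ for which $S_d(V^{oa},\mathcal{O}_\varepsilon)$ has bounded height. Taking $V^e=V^{oa}$, this is exactly the conclusion $S_d(V^e,\mathcal{O}_\varepsilon)$ has bounded height asserted by Conjecture~\ref{alt}. (If one also wanted the converse implication---that an arbitrary non-empty open $V^e$ with the bounded-height property forces the same for $V^{oa}$---one would reduce to the complement $V\setminus V^e$, a proper Zariski closed subset, and argue by induction on dimension; but this direction is not needed to establish the statement.)

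The only step that requires checking rather than mere citation is the passage from the hypothesis ``$V$ satisfies Property $(S)$'' to ``there exists a non-empty open $V^e$ of $V$'': this is precisely where Lemma~\ref{equi} and the openness of $V^{oa}$ (Theorem~\ref{chiuso}) are used, and it is the sole non-formal ingredient in the argument. The genuinely deep inputs---Habegger's proof that Conjecture~\ref{alt1} holds and the Bombieri--Masser--Zannier result that the deprived set is Zariski open---are black-boxed, so from this vantage point the proof is short; the substantive contribution of the paper (the optimality discussed in Section~3) lies elsewhere, and the present statement is essentially a bookkeeping reformulation that isolates the hypothesis under which the conclusion is meaningful.
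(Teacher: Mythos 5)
Your derivation is correct and matches the route the paper intends: the paper does not give a formal proof of Conjecture~\ref{alt} but introduces it with the remark that, once Lemma~\ref{equi} identifies Property $(S)$ with $V^{oa}\neq\emptyset$, ``one can easily reformulate'' Conjecture~\ref{alt1}; your proof is precisely that reformulation carried out in the one direction that matters, taking $V^e=V^{oa}$ and invoking Theorem~\ref{phill}. One small point worth flagging: the paper states Theorem~\ref{chiuso} (openness of $V^{oa}$) only for tori, so in the abelian setting you are implicitly relying on the extension of that result to abelian varieties due to R\'emond, which the paper itself also uses silently when stating Conjecture~\ref{alt1} for abelian $A$; this is a known fact, not a gap, but it deserves an explicit citation in a fully rigorous write-up.
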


One could hope to relax the assumption of Property $S$ on the variety.  Could it be sufficient to assume, as we do for curves, that  $V$ is transverse? What about a product of varieties which do satisfy Property $S$? 
 In section 3, we prove that theorem \ref{phill} is optimal for subvarieties of a power of an elliptic curves $E^g$.
\begin{thm}
\label{sottimale}
Let $V$ be a subvariety of $E^g$ of dimension $d$. Suppose that $V$ does not satisfy Property $(S)$ (or equivalently that $V^{oa}=\emptyset$).
Then, for every non-empty Zariski open subset $U$ of $V$ the set $S_d(U)$ does not have bounded height.

\end{thm}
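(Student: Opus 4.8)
The plan is to turn the failure of $(S)$ into an explicit supply, inside \emph{every} open $U\subseteq V$, of points of $S_d(U)$ of arbitrarily large height, obtained as preimages of torsion points under a perturbed family of surjections $E^g\to A_2$ onto a $d$-dimensional abelian variety.

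I would first set up the structure. By Lemma~\ref{equi}, $V^{oa}=\emptyset$ means $V$ fails $(S)$, so there is $\phi\colon E^g\to E^g$ with $\dim\phi(E^g)\ge d>\dim\phi(V)$. Working up to isogeny throughout (legitimate: isogenies distort canonical heights only by bounded factors and respect torsion and codimension of abelian subvarieties, so neither ``bounded height'' nor membership in the sets $S_j$ is affected), I split $E^g=K\times A_1$ with $K=(\ker\phi)^0$ and $\pi\colon E^g\to A_1$ the projection, and put $W:=\pi(V)\subsetneq A_1$, $\dim W=d-e$ with $e\ge 1$; then $\dim K=g-\dim A_1\le g-d$, i.e. $\operatorname{cod}K\ge d$, and the generic fibre of $\pi|_V\colon V\to W$ has dimension $e$ and lies in a coset of $K$. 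The favourable situation is the one in which $W$ is not contained in a torsion translate of a subgroup of $A_1$; this holds, for instance, whenever $V$ is weak-transverse in $E^g$, and (after choosing $\phi$ suitably) whenever $V$ lies in a non-torsion coset of a subgroup. The degenerate situations, where $V$ is contained in a torsion translate of a proper subgroup, are disposed of separately: if that subgroup has codimension $\ge d$ then it lies entirely in $S_d(V)$ and we are done, and otherwise $V$ sits in a proper power $E^{g'}$, $g'<g$, where one concludes by induction on the ambient dimension (using that $S_d^{E^g}(V)$ contains the analogous sets computed inside $E^{g'}$, and, at the bottom of the recursion, that a union of torsion translates of positive-dimensional abelian subvarieties is Zariski dense of unbounded height). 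So assume we are in the favourable case.

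The construction is as follows. Write $A_2=A_2'\times A_2''$ with $\dim A_2'=e$ and $\dim A_2''=d-e$, and (identifying $K,A_1,A_2',A_2''$ with powers of $E$) pick coordinate projections $\alpha\colon K\to A_2'$ and $\beta\colon A_1\to A_2''$ so that $\alpha$ is generically finite on the generic fibre $\widetilde F_w:=\{k\in K:(k,w)\in V\}$ and $\beta$ is generically finite on $W$; this is possible since $\dim\widetilde F_w=e=\dim A_2'$ and $\dim W=d-e=\dim A_2''$, so one projects onto coordinates forming a transcendence basis, and since $\widetilde F_w$ and $W$ are projective these restrictions are in fact surjective. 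Set $\ell:=(\alpha\circ p_K,\ \beta\circ\pi)\colon E^g\to A_2$, with $p_K\colon K\times A_1\to K$ the first projection. Then $\ell$ is surjective onto the $d$-dimensional $A_2$, so $(\ker\ell)^0$ has codimension $d$ and the induced isogeny $E^g/(\ker\ell)^0\to A_2$ carries torsion to torsion; hence any $v\in V$ with $\ell(v)$ torsion lies in $(\ker\ell)^0+(\text{torsion})$, i.e. $\ell^{-1}((A_2)_{\mathrm{Tor}})\cap V\subseteq S_d(V)$. Since $\ell|_V$ is dominant onto $A_2$, this preimage is Zariski dense in $V$ — but a single $\ell$ need not give unbounded height (if $\ell|_V$ were an isomorphism it would meet torsion only in height $0$), so one must perturb.

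This perturbation is the heart of the proof. Choose a homomorphism $\delta_{A_1}\colon A_1\to A_2'$ and set $\delta:=\iota\circ\delta_{A_1}\circ\pi$, where $\iota\colon A_2'\hookrightarrow A_2$; thus $\delta$ vanishes on $K\times\{0\}$ and takes values in $A_2'$, so $\ell_n:=\ell+n\delta$ is again surjective onto $A_2$ with connected kernel of codimension $d$ and $\ell_n|_V$ dominant onto $A_2$ (the perturbation only translates, within $A_2'$, the image of each fibre), whence again $\ell_n^{-1}((A_2)_{\mathrm{Tor}})\cap V\subseteq S_d(V)$. Now choose $\delta_{A_1}$ so that for some generic $w\in W$ with $\beta(w)$ torsion the point $\omega:=\delta_{A_1}(w)$ is \emph{non-torsion}: this is the only substantive choice, and it is available in the favourable case — its failure would force, via Manin--Mumford applied to the image of $(\beta,\delta_{A_1})|_W$, that $\delta_{A_1}|_W$ be congruent modulo torsion to a homomorphic function of $\beta|_W$, which excludes only a proper subgroup of $\operatorname{Hom}(A_1,A_2')$ when $W$ is not a coset (and when $W$ is a non-torsion coset one instead takes $\delta_{A_1}$ constant on $W$ with non-torsion value). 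Fix such a $w$, generic enough that $\alpha|_{\widetilde F_w}$ is surjective onto $A_2'$ and $\widetilde F_w\not\subseteq V\setminus U$. For every $n$ and every torsion $\zeta'\in A_2'$ there is $k_0\in\widetilde F_w$ with $\alpha(k_0)=\zeta'-n\omega$; then $v_n:=(k_0,w)\in V$ and $\ell_n(v_n)=(\zeta',\beta(w))$ is torsion, so $v_n\in S_d(V)$. By torsion-invariance of the canonical height, $\hat h(\alpha(k_0))=\hat h(\zeta'-n\omega)=n^2\hat h(\omega)$ regardless of $\zeta'$, whence $\hat h(v_n)\ge\hat h(k_0)\ge C_\alpha^{-1}n^2\hat h(\omega)\to\infty$ (here $\hat h(\alpha(x))\le C_\alpha\hat h(x)$ for all $x$, and $\hat h(\omega)>0$). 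Finally, for fixed $n$ the admissible $k_0$ — the $\alpha$-preimages in $\widetilde F_w$ of the dense set $(A_2')_{\mathrm{Tor}}$ — are Zariski dense in the positive-dimensional $\widetilde F_w$ while all yielding the same height bound, and $U\cap\widetilde F_w$ is a nonempty open subset of $\widetilde F_w$, so one may choose $k_0$ with $v_n\in U$. Hence $S_d(U)$ contains points of arbitrarily large height, as required. The step I expect to be the real obstacle is precisely this choice of the shift $\omega$ — equivalently, the reduction of every configuration to the favourable case and the disposal of the genuinely degenerate ones by the descending induction above, whose base cases ($V$ weak-transverse, handled by the construction, and $V$ equal to a small power of $E$, handled by density of unions of positive-dimensional subgroups) need care — together with the routine but lengthy bookkeeping of the intervening isogenies.
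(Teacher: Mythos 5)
Your approach shares the paper's skeleton: project by a morphism witnessing the failure of $(S)$, split $E^g$ into base and fibre directions, and assemble a lower-triangular rank-$d$ morphism out of a base piece and a fibre piece that together kill the constructed point, while a non-torsion datum drives the fibre coordinate to arbitrarily large height. In both proofs the high points sit in a single fibre over a suitably generic base point, and one perturbs the fibre-direction homomorphism (the paper by letting $|\phi_2|\to\infty$, you by scaling $\delta$ by $n$). The difference is where the case analysis lives: the paper pushes all of it into Lemma~\ref{s1denso} applied to the \emph{image} $V_1=\psi(V)\subset E^d$, which supplies a dense set of non-torsion points $x_1\in S_{d_1}(V_1)\setminus S_d(V_1)$ uniformly whether or not $V_1$ is a torsion coset, whereas you case-split at the level of $W=\pi(V)$ (and of $V$ itself) and handle the weak-transverse and non-weak-transverse situations by entirely separate arguments.

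There is a genuine gap in your treatment of the degenerate case $V\subset B+\zeta$, $\zeta$ torsion, $c:=\operatorname{cod}B<d$, which you propose to handle ``by induction on the ambient dimension.'' After translating by $\zeta$ and identifying $B$ with $E^{g'}$, the correct identity is $S_d^{E^g}(V)=S_{d-c}^{E^{g'}}(V-\zeta)+\zeta$ (subgroups of codimension $\ge d-c$ inside $B$ have codimension $\ge d$ in $E^g$, and conversely every $B'$ of codimension $\ge d$ in $E^g$ intersects $B$ in codimension $\ge d-c$ inside $B$). Since $d-c<d=\dim(V-\zeta)$, the reduced problem concerns $S_{d-c}$, not $S_{\dim}$; the theorem you are trying to prove by induction does not have this as an instance. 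Worse, the inductive hypothesis does not hold for the reduced data: it is false that $V$ failing $(S)$ in $E^g$ forces $V-\zeta$ to fail $(S)$ in $E^{g'}$. For instance, a transverse surface $V$ in $E^4\times 0\subset E^5$ fails $(S)$ in $E^5$ (project to the last two coordinates), yet a generic such $V$ satisfies $(S)$ viewed in $E^4$, so $(V)^{oa}_{E^4}\ne\emptyset$ while $(V)^{oa}_{E^5}=\emptyset$; anomalousness in $E^{g'}$ is a strictly stronger condition than anomalousness in $E^g$ for subvarieties of $B$. The repair is available and is probably what you half-gesture at (``positive-dimensional abelian subgroups ... unbounded height''): for any $r<\dim V$, $S_r(U)$ is automatically unbounded for every nonempty open $U$, because one may choose a surjection $\phi\colon E^g\to E^r$ dominant on $V$ and then the fibres of $\phi|_V$ over torsion points are positive-dimensional, hence of unbounded height, and dense as the torsion point varies. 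But this must be stated and proved as its own lemma, not smuggled in as an inductive appeal to Theorem~\ref{sottimale} itself. Likewise, the existence of a $\delta_{A_1}$ with $\delta_{A_1}(w)$ non-torsion on a dense set of $w$ with $\beta(w)$ torsion is correct in the weak-transverse case (since $(\beta,\delta_{A_1})(W)$ is then not a torsion coset, so by Manin--Mumford its torsion is non-dense, and one subtracts from the dense $\beta$-torsion set), but your formulation via a ``proper subgroup of $\operatorname{Hom}(A_1,A_2')$'' is not the right statement and would need to be reworked.
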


The proof is constructive. A fundamental point is to associate to a non-torsion point of  $E(\overline\qe)$ a Zariski dense subgroup of $E^n$.

A natural rising question is to investigate the height for larger codimension of the algebraic subgroup.
Let $\Gamma$ be a subgroup of $A(\overline{\mathbb{Q}})$ of finite rank. We denote $\Gamma_\varepsilon=\Gamma+\mathcal{O}_\varepsilon$. 

 \begin{con}
\label{hb}

Let $V$ be an irreducible algebraic subvariety  of $A$ of dimension $d$, defined over $\overline{\qe}$. 
 Then there exists  $\varepsilon>0$
  and a non-empty Zariski open subset $V^e$ of $V$ such that:
\begin{enumerate}
\item  If $V$ is weak-transverse, 
  $S_{d+1}(V^e,\mathcal{O}_\varepsilon)$  has bounded height.

\item  If $V$ is transverse,   $S_{d+1}(V^e,\Gamma_\varepsilon)$ has bounded height.
\end{enumerate}
\end{con}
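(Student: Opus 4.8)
We sketch a possible approach to Conjecture~\ref{hb}, restricting for concreteness to $A=E^g$ and indicating where the real difficulty lies. First, if $V$ is transverse then part~(2) with $\Gamma=0$ already yields part~(1), so the genuinely new content of~(1) concerns weak-transverse varieties that are not transverse. Fix $F=\mathcal O_\varepsilon$ (resp.\ $F=\Gamma_\varepsilon$) and a point $P\in S_{d+1}(V,F)$, so that $P=b+f$ with $b\in B$, $f\in F$ and $\cod B\ge d+1$. Since $A$ is a power of $E$, the quotient $A/B$ is isogenous to a power of $E$, hence $B$ is contained in an abelian subvariety of codimension \emph{exactly} $d+1$, and we may assume $\cod B=d+1$. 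Let $\pi=\pi_B\colon A\to A':=A/B$ be the quotient map. Then $\dim A'=d+1>d=\dim V$, so $V':=\pi(V)$ is a subvariety of $A'$ of dimension at most $\dim A'-1$; it is weak-transverse (resp.\ transverse) since $V$ is; and
\[
\pi(P)=\pi(f)\in V'\cap\pi(F),\qquad \pi(\mathcal O_\varepsilon)\subseteq\mathcal O_{c\varepsilon},\qquad \pi(\Gamma_\varepsilon)\subseteq\pi(\Gamma)_{c\varepsilon},
\]
where $\pi(\Gamma)$ still has finite rank.

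The gain over the Bounded Height Conjecture is precisely that $V'$ is now a \emph{proper} subvariety of $A'$. When $F=\mathcal O_\varepsilon$ the set $\mathcal O_{c\varepsilon}$ already has bounded height, so $\pi(P)$ has height $\le c\varepsilon$ automatically. When $F=\Gamma_\varepsilon$ one is reduced to bounding the height on $V'\cap\pi(\Gamma)_{c\varepsilon}$ for the transverse variety $V'$ in $A'$; by the Mordell--Lang theorem (Faltings, Vojta) together with the Bogomolov property (Zhang), in the quantitative form of R\'emond, this set is contained in the union of finitely many translated proper subgroups lying in $V'$ and a set of bounded height. Thus, outside a proper closed subset of $V'$, the point $\pi(P)$ has bounded height. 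Since $\dim A'\le\dim A$, with equality only when $V$ is a divisor, the argument is organised as an induction on $\dim A$; the base case ($V$ a divisor, so $\cod B=g$ and $B=0$) is this ``Bogomolov $+$ Mordell--Lang'' statement for $\Gamma_\varepsilon$, and is trivial for $\mathcal O_\varepsilon$ since then $S_{d+1}(V,\mathcal O_\varepsilon)=V\cap\mathcal O_\varepsilon\subseteq\mathcal O_\varepsilon$.

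It remains to descend from $\pi(P)$ to $P$ and to fix $V^e$. The fibre $V\cap\pi^{-1}(\pi(P))=V\cap(P+B)$ is $V$ intersected with a coset of dimension $g-d-1$. If the component through $P$ has positive dimension it is an anomalous subvariety of $V$ (Definition~\ref{defanom}) lying in a coset of dimension $\le g-\dim V+\dim(\text{component})-1$. Let $V^e$ be the complement in $V$ of the union of all positive-dimensional subvarieties of $V$ that either lie in a coset of an abelian subvariety of dimension $\le g-d-1$ or are pulled back from one of the exceptional subgroup-cosets of the previous step; by the structure theorem of Bombieri--Masser--Zannier for the anomalous set (\cite{BMZ1}, cf.\ Theorem~\ref{chiuso}), applied in $A$ and in the various $A/B$, this union is a finite union of translated subvarieties, hence Zariski closed, and the part of it that can meet $S_{d+1}(V,F)$ is a \emph{proper} closed subset (here one uses Zhang's theorem when $F=\mathcal O_\varepsilon$, and R\'emond's structure result when $F=\Gamma_\varepsilon$). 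For $P\in V^e$ the fibre $V\cap(P+B)$ is finite, so Philippon's arithmetic B\'ezout inequality yields
\[
h(P)\ \le\ \sum_{Q\in V\cap(P+B)}h(Q)\ \le\ c(V)\bigl(h(P+B)+1\bigr)\ \le\ c'(V)\bigl(h(\pi(P))+h(B)+1\bigr).
\]

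The main obstacle is that neither $h(B)$, the height of $B$ as a cycle, nor $\deg B$ is bounded as $B$ runs over the codimension-$(d+1)$ abelian subvarieties of $E^g$, and the height bound furnished for $\pi(P)$ in the second step may likewise deteriorate with $B$. Removing this dependence is the technical heart: one must make both the bound for $h(\pi(P))$ and the arithmetic B\'ezout estimate uniform in $B$, with dependence only through $\deg B$ (Philippon, R\'emond), then bound the number of relevant $B$ of bounded degree and pass to the limit, exactly as in the bounded-height theorems for curves (\cite{BMZ}, \cite{V}) and in the proof of Theorem~\ref{phill} (\cite{Phil1}). Carried through, this gives a bound for $h(P)$ independent of $P\in S_{d+1}(V^e,F)$, proving Conjecture~\ref{hb} for $A=E^g$. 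For a general abelian variety there is the further obstruction that a simple quotient $A/B$ of dimension $>d+1$ admits no intermediate subgroup, so the reduction to $\cod B=d+1$ must be replaced by a limiting or fibration argument — which is what remains to be done.
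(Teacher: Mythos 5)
The statement you are proving is a \emph{conjecture}, and the paper does not prove it: it only establishes the special case recorded as Theorem~\ref{wtalt}, under the additional hypothesis~(\ref{stella}) (equivalently Property~$(S)$, by Lemma~\ref{ps}), via a non-effective Vojta-inequality argument due to R\'emond \cite{RHG2}, \cite{Gprep} and the author \cite{io2}. The paper moreover states explicitly that the simplest genuinely new case beyond Property~$(S)$ --- $C_1\times C_2$ with $C_1$ transverse in $E^2$ and $C_2$ transverse in $E^3$ --- is open. So no complete argument should be expected here, and indeed your proposal is a programme rather than a proof: you yourself flag that the ``technical heart'' (uniformity in $B$) ``remains to be done.'' That is precisely where the argument fails, not a removable technicality. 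As $B$ ranges over the infinitely many abelian subvarieties of $E^g$ of codimension $d+1$, the quotient varieties $V'=\pi_B(V)$ have unbounded degree, so the Mordell--Lang/Bogomolov input you invoke for $V'\cap\pi_B(\Gamma)_{c\varepsilon}$ produces a height bound and an exceptional union of translated subgroups that both depend on $B$; likewise $h(B)$ and $\deg B$ in your arithmetic B\'ezout step are unbounded. Consequently the ``finite union'' you remove to define $V^e$ is in fact an infinite union over $B$, and there is no single proper closed subset and no single height bound at the end. Making such estimates uniform in $B$ is exactly the content of the known bounded-height theorems (\cite{BMZ}, \cite{V}, \cite{Phil1}), and there it is achieved only for codimension $d$ and under Property~$(S)$ (equivalently $V^{oa}\neq\emptyset$); for codimension $d+1$ under mere (weak-)transversality nobody knows how to do it, which is why the statement is a conjecture.

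Two smaller points. First, for part~(1) the observation that $\pi_B(P)\in\mathcal O_{c\varepsilon}$ ``automatically'' has bounded height gives you nothing about $h(P)$ until the descent step is carried out, and the descent is where all the $B$-dependence sits; so part~(1) is not easier than part~(2) in your scheme. Second, the paper's own partial result avoids the quotient-by-$B$ uniformity problem entirely by running a Vojta inequality directly on $V$ (at the cost of effectivity and of the stronger hypothesis Property~$(S)$, which is needed so that the relevant projections of $V$ keep full dimension); your reduction-to-quotients strategy is closer in spirit to the effective curve arguments of \cite{BMZ} and \cite{V}, and if the uniformity could be supplied it would be a genuinely different and stronger route --- but as written it establishes nothing beyond what is already conjectured.
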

In some cases Conjecture \ref{hb} is proven. 
For $\Gamma \not=0$ or $V$ weak-transverse but not transverse, the method used for the proofs is  based on a Vojta inequality. This 
   method is not effective. It  gives
optimal results for curves (see \cite{RV} Theorem 1.5 and  \cite{io} Theorem 1.2). On the contrary, for varieties of dimension at least two a hypothesis stronger than transversality is needed. Part i. of the following theorem  is proven  by  R\'emond \cite{RHG2}   Theorem 1.2 and \cite{Gprep}. Whereas, Part ii. is proven by the author  \cite{io2} Theorem 1.5.

\begin{thm}
\label{wtalt}
Let $V$ be an irreducible subvariety  of $E^g$ of dimension $d$, defined over $\overline{\qe}$.  Let $p$  be a point in $E^s(\overline{\qe})$ not lying in any proper algebraic
subgroup of $E^s$.
Assume that $V$ satisfies 
\begin{equation}
\label{stella}
\dim (V+B)=\min(\dim V+\dim B, \,\,\, g )
\end{equation} for all abelian subvarieties $B$ of $E^g$.    Then there exists a non-empty Zariski open subset $V^e$ of $V$ and $\varepsilon>0$ such that:
\begin{enumerate}
\item $S_{d+1}(V^e,\Gamma_\varepsilon)$ has bounded height,
\item $S_{d+1}(V^e\times p,\mathcal{O}_\varepsilon)$ has bounded height.
\end{enumerate}

\end{thm}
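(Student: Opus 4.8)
The result collects two bounded–height statements that are known: part (i) is Th\'eor\`eme~1.2 of \cite{RHG2}, sharpened in \cite{Gprep}, and part (ii) is Theorem~1.5 of \cite{io2}. Both are consequences of the generalized Vojta inequality, and the plan I would follow is to reduce (i), by projecting away the relevant abelian subvariety, to a \emph{uniform} version of R\'emond's bounded–height theorem for the intersection of a transverse subvariety satisfying \eqref{stella} with a neighbourhood of a finite–rank group, and then to obtain (ii) from (i) by exploiting that $p$ lies in no proper algebraic subgroup of $E^s$.

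\emph{Part (i).} One may assume $d\ge 1$. Given $P\in S_{d+1}(V^e,\Gamma_\varepsilon)$ we have $P-\gamma-\zeta\in B$ for some $\gamma\in\Gamma$, some $\zeta$ with $\|\zeta\|\le\varepsilon$, and some abelian subvariety $B$ of $E^g$; enlarging $B$ we may take $\operatorname{codim}B=d+1$, so $\dim B=g-d-1$. Let $\pi_B\colon E^g\to A_B:=E^g/B$ be the quotient ($A_B$ is isogenous to $E^{d+1}$). By \eqref{stella}, $\dim(V+B)=\min(d+(g-d-1),g)=g-1$, so $W:=\pi_B(V)$ has codimension one in $A_B$, $\pi_B|_V$ is generically finite onto $W$, and, applying \eqref{stella} to the abelian subvarieties of $E^g$ containing $B$, $W$ is again transverse in $A_B$ and satisfies the analogue of \eqref{stella}. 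Since $W$ has codimension one, $S_{d+1}(W,\cdot)$ is just intersection with $W$, and $\pi_B(P)$ lies in $W\cap(\pi_B(\Gamma)+\mathcal O_{c\varepsilon})$ with $\operatorname{rk}\pi_B(\Gamma)\le\operatorname{rk}\Gamma$; so R\'emond's bounded–height theorem (the curve case $d=1$ being \cite{RV}, \cite{io}) bounds $\widehat h(\pi_B(P))$. To finish one fixes the open set $V^e$ once and for all: the positive–dimensional fibres of $\pi_B|_V$ are positive–dimensional components of slices $V\cap(B+y)$, each contained in a coset of dimension $\le g-d-1$ and hence anomalous in $V$ in the sense of Definition~\ref{defanom}; by Theorem~\ref{chiuso} the anomalous locus is Zariski closed, so deleting it (together with the finitely many further closed loci coming from R\'emond's theorem and from the degeneracy of the maps $\pi_B|_V$) leaves a single $V^e$ along which $\widehat h$ transfers through every $\pi_B$. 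The bound on $\widehat h(P)$ obtained this way must be \emph{independent of $B$}, and this is the heart of the matter: one invokes the quantitative form of the Vojta and Mumford inequalities, together with the fact that $\deg W$ and $\operatorname{rk}\pi_B(\Gamma)$ stay bounded in terms of $\deg V$, $g$ and $\operatorname{rk}\Gamma$ as $B$ ranges over the infinitely many admissible subgroups. Supplying this uniformity, and the attendant choice of $V^e$, is exactly the work carried out in \cite{RHG2}, \cite{Gprep}, and is the step I expect to be the main obstacle.

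\emph{Part (ii), from the proof of (i).} Note first that $V\times p$ is only weak–transverse in $E^{g+s}$ (it sits inside the proper translate $E^g\times\{p\}$), so \eqref{stella} fails there and one cannot quote (i) verbatim; the point is to use $p$ to descend. Choose $\varepsilon$ smaller than the $\varepsilon_0(p)>0$ for which $p\notin H+\mathcal O_{\varepsilon_0(p)}$ for every proper abelian subvariety $H$ of $E^s$ (such an $\varepsilon_0(p)$ exists precisely because $p$ lies in no proper algebraic subgroup of $E^s$). If $(P,p)\in S_{d+1}(V^e\times p,\mathcal O_\varepsilon)$, say $(P,p)\in B'+\mathcal O_\varepsilon$ with $B'$ of codimension $\ge d+1$ in $E^{g+s}$, then projecting to the last $s$ coordinates shows $p$ lies within $\varepsilon$ of the abelian subvariety $\operatorname{pr}_2(B')$, hence $\operatorname{pr}_2(B')=E^s$. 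Therefore $B:=B'\cap(E^g\times\{0\})$ satisfies $\operatorname{codim}_{E^g}B=\dim B'-s\ge d+1$, and $B'/(B\times\{0\})$ is the graph of a homomorphism $\psi\colon E^s\to E^g/B$; applying $\pi_B$ one finds that $\pi_B(P)$ lies on $W:=\pi_B(V)$ and close to $\psi(p)$, with the distance controlled in terms of $\varepsilon$ and the complexity of the pair $(B,\psi)$. Since $\psi$ runs through the finitely generated $\operatorname{End}(E)$–module $\operatorname{Hom}(E^s,E^g/B)$, the elements $\psi(p)$ generate a finite–rank subgroup $\Gamma''$ of $E^g/B$ whose rank is bounded independently of $B$; one is thus back in the situation of (i) for the variety $W$ and the group $\Gamma''$, with the single extra parameter $\psi$ to be carried through the quantitative estimates. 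Running the argument of (i) in this enlarged form, and taking for $V^e$ and $\varepsilon$ the data it furnishes for the finite–rank group generated by $\Gamma$ and all these $\Gamma''$ (with $\varepsilon<\varepsilon_0(p)$), bounds the height of $S_{d+1}(V^e\times p,\mathcal O_\varepsilon)$, which is (ii). As in (i), the decisive point is the uniformity of the resulting bound, now over both $B$ and $\psi$.
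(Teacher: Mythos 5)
The paper does not actually prove Theorem \ref{wtalt}: it is imported from the literature, with part (i) attributed to R\'emond (\cite{RHG2} Theorem 1.2 and \cite{Gprep}) and part (ii) to \cite{io2} Theorem 1.5, which are exactly the sources you identify. Your added sketch of the Vojta-inequality strategy is a plausible outline, but since you explicitly defer the decisive uniformity step to those same references, your treatment is essentially the same as the paper's (a citation).
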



In Lemma \ref{ps}, we will see that the assumption (\ref{stella}) is equivalent to Property $(S)$.
Finally we give some  examples of varieties satisfying Property $(S)$ and of varieties which do not satisfy Property $(S)$ but for which Conjecture \ref{hb} holds. 

To conclude we remark that, if one knows that, for $r\ge d+1$ and $V$ transverse,  the set  $S_r(V^e,\Gamma_\varepsilon)$ has bounded height,  then \cite{io2}  Theorem 1.1 implies that $S_r(V^e, \Gamma_\varepsilon)$ is not Zariski dense in $V$. If $\Gamma$ has trivial rank, it is sufficient to assume $V$ weak-transverse. This makes results on heights particularly interesting. \\

{\it Acknowledgments:} 
I kindly thank the Referee for his accurate and nice suggestions.

 \newpage

 \section{preliminaries}

\label{nota}

Let $E$ be an elliptic curve defined over a number field. 
All statement in the introduction become trivially verified  for a zero-dimensional variety. In the following we avoid this case.
Let $V$ be an irreducible algebraic subvariety of $E^g$ of dimension $0<d<g$ defined over $\overline{\qe}$.

We fix on $E(\overline{\qe})$ the canonical N\'eron-Tate height function. We denote by   $||\cdot||$ the induced semi-norm on $E(\overline\qe)$.   For  $x=(x_1, \dots,x_g)\in E^g(\overline{\qe})$, we denote  $$||x||=\max_i||x_i||.$$ 
For $\varepsilon \ge 0$, we define
$$\mathcal{O}_{\varepsilon}=\{ \xi \in E^g(\overline{\qe}) : ||\xi|| \le \varepsilon\}.$$ 
The height  of a non-empty set $S\subset E^g(\overline{\qe})$ is the supremum of the heights  of
its elements. 
The degree of $S$ is the degree (possibly $\infty)$ of the field of definition of the points of $S$.

The ring of endomorphism $\emor(E)$ is isomorphic  either to $\ze$ (if $E$ does not have C.M.) or to an order in an imaginary quadratic field  (if $E$ has C.M.). 
 We consider on $\emor(E)$ the hermitian scalar product $\langle \cdot,\cdot \rangle$ induced by $\mathbb{C}$ and denote by $|\cdot|$ the associated norm. Note that the metric does not depend on the   embedding of $\emor(E)$ in $\mathbb{C}$.

We denote by $M_{r,g}(\emor(E))$ the  module of $r\times g$ matrices with
entries in  $\emor(E)$.
For $F=(f_{ij})\in M_{r,g}(\emor(E))$,  we define $$|F|=\max_{ij}|f_{ij}|.$$

We identify a morphism $\phi:E^g \to E^r$ with a matrix  in
$M_{r,g}({\rm End }(E))$.

Let $B$ be an algebraic subgroup of $E^g$ of codimension $r$. Then $B \subset \ker \phi_B$ for a  surjective morphism $\phi_B:E^g\to E^r$.
 Conversely, we denote by $B_\phi$ the kernel of a surjective morphism $\phi:E^g \to E^r$. Then $B_\phi$  is an algebraic subgroup of $E^g$ of codimension $r$.

If $\phi:E^g \to E^{g'}$ is a surjective morphism, we can complement $\phi$ and define an isogeny $f:E^g \to E^g$ such that  $f(\ker \phi)=0\times E^{g-g'}$ and  $\pi_1f=\phi$, where $\pi_1:E^{g} \to E^{g'}$ is  the natural projection on the first $g'$ coordinate.
More precisely; recall that every abelian subvariety of $E^g$ of
dimension $n$ is isogenous to $E^n$. Then $\ker \phi$ is isogenous to $E^{g-g'}$, let $i$ be such an isogeny.
Let $ (\ker \phi)^{\perp}$ be an orthogonal complement of $\ker \phi $ in $E^g$. Then $E^{g'}$ is isogenous to $ (\ker \phi)^{\perp}$. Let $j:E^{g'} \to (\ker \phi)^{\perp}$ be such an isogeny.  Define  
 the isogeny 
 \begin{equation*}
 \begin{split}
 f: &E^g \to E^g\\ 
 &x \to \left(\phi(x), i(x-j(\phi(x))\right) .
 \end{split}
  \end{equation*}
This  $f$ has  the wished  property.

Let us state a classical:
\begin{lem}
\label{projezione}
For every algebraic subvariety $X$ of $E^g$ of dimension $d$ there exists a
projection on $d$ coordinates such that the restriction to $X$ is dominant.
\end{lem}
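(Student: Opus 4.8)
The plan is to induct on the codimension $g-d$ of $X$ in $E^g$. When $g-d=0$ the variety $X$ is an irreducible closed subvariety of dimension $g$ of the irreducible variety $E^g$, hence $X=E^g$, and the identity map is a projection with the required property. Assume now $d<g$ and that the assertion holds for every subvariety of every $E^m$ of codimension strictly smaller than $g-d$. For $1\le j\le g$ write $q_j\colon E^g\to E^{g-1}$ for the projection that forgets the $j$-th coordinate. It is enough to produce an index $j$ with $\dim\overline{q_j(X)}=d$: indeed, $Y:=\overline{q_j(X)}$ is then an irreducible subvariety of $E^{g-1}$ of dimension $d$, so of codimension $(g-d)-1$, and by the inductive hypothesis there is a projection $\rho$ of $E^{g-1}$ onto $d$ of its coordinates whose restriction to $Y$ is dominant; the composite $\rho\circ q_j$ is then a projection of $E^g$ onto $d$ coordinates (none of them the $j$-th) whose restriction to $X$ has image dense in $\rho(Y)$, hence dense in $E^d$.

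To find such a $j$, suppose for contradiction that $\dim\overline{q_j(X)}<d$ for every $j$. Since every fibre of $q_j$ is a curve, all fibres of the dominant morphism $q_j|_X\colon X\to\overline{q_j(X)}$ have dimension at most $1$; combined with $\dim\overline{q_j(X)}<d$ this forces $\dim\overline{q_j(X)}=d-1$ and a generic fibre of dimension exactly $1$. By the theorem on the dimension of the fibres there is thus a dense open $U_j\subseteq X$ such that the fibre of $q_j|_X$ over $q_j(x)$ has dimension $1$ for every $x\in U_j$; any such fibre therefore contains a one-dimensional irreducible component which, being closed in the irreducible curve $q_j^{-1}(q_j(x))=\{x_1\}\times\cdots\times E\times\cdots\times\{x_g\}$ (the $j$-th factor free), must equal the whole slice. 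Hence for $x\in U_j$ the variety $X$ contains the $j$-th coordinate slice through $x$. Now choose a point $x$ in $\bigl(\bigcap_{j=1}^{g}U_j\bigr)\cap X_{\mathrm{sm}}$, where $X_{\mathrm{sm}}$ denotes the smooth locus; this is a non-empty open subset of $X$, being the intersection of finitely many dense opens of the irreducible variety $X$. For every $j$ the $j$-th coordinate slice through $x$ lies in $X$, so $T_xX$ contains the tangent line $L_j$ of that slice, namely the $j$-th coordinate direction in $T_xE^g=\prod_iT_{x_i}E$. Hence $T_xX\supseteq L_1+\cdots+L_g=T_xE^g$, so $d=\dim T_xX=g$, contradicting $d<g$. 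This yields the required index $j$ and completes the induction.

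The ingredients invoked here — irreducibility of the closure of an image, the bound on fibre dimension, density of the smooth locus, and the elementary fact that a one-dimensional closed subvariety of an elliptic curve is the whole curve — are all standard; the one point that needs a little care is securing a single point $x$ that is simultaneously smooth and generic for all $g$ projections at once, which is possible precisely because only finitely many projections are involved, and I expect this genericity bookkeeping (so that the tangent-space count applies) to be the only, and quite mild, obstacle. Alternatively one can bypass the geometry: $\overline{\qe}(X)$ is generated over $\overline{\qe}$ by the $g$ subfields $F_1,\dots,F_g$ obtained by pulling back $\overline{\qe}(E)$ along the $g$ coordinate projections $E^g\to E$, each $F_i$ of transcendence degree at most $1$, and the function sending $I\subseteq\{1,\dots,g\}$ to the transcendence degree over $\overline{\qe}$ of the compositum of $\{F_i:i\in I\}$ is, by submodularity of transcendence degree, the rank function of a matroid of rank $d$; such a matroid has a basis $I$ with $|I|=d$, and the projection onto the coordinates indexed by $I$ is then dominant on $X$.
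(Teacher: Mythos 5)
Your proof is correct, and it runs along a genuinely different line from the paper's. The paper argues bottom--up: it takes the maximal $d_0$ for which some projection onto $d_0$ coordinates is surjective on $X$, and if $d_0<d$ it derives a contradiction by observing that $X$ sits inside the fibre product over $\pi_0(X)$ of the images $X_i$ under the projections onto $d_0+1$ coordinates, each of dimension $d_0$, so that $\dim X=d_0<d$ is absurd. You instead argue top--down by induction on the codimension, dropping one coordinate at a time; the key point that some $q_j$ preserves the dimension of the image is settled by a tangent-space count at a point that is simultaneously smooth and generic for all $g$ projections, after showing that otherwise $X$ would contain every coordinate slice through such a point. Both steps of your argument are sound: the fibre-dimension theorem in fact gives that \emph{every} fibre of $q_j|_X$ has dimension exactly $1$ in the contradiction case (the lower bound $\dim f^{-1}(y)\ge\dim X-\dim Y$ holds for all fibres), so you could even take $U_j=X$, and the genericity bookkeeping you flag is indeed harmless since only finitely many dense opens of the irreducible $X$ are intersected. (You do use irreducibility of $X$ throughout, e.g.\ for the fibre-dimension theorem and for $\dim T_xX=d$ at smooth points, but this matches the paper's usage, where the lemma is only ever applied to irreducible varieties or their components.) Your second, field-theoretic argument via the algebraic matroid of the subfields $F_i$ is also correct and is in spirit the closest of the three to the paper's greedy construction, since both ultimately rest on the exchange property for transcendence degree; it is arguably the cleanest route, while your geometric induction buys a concrete picture (coordinate slices and tangent directions) at the cost of invoking smoothness of the generic point.
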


\begin{proof}

Let $d_0$ be the maximal integer such that the restriction of
$\pi_0: E^g\to E^{d_0}$ to $X$   is surjective.
If $d_0\ge d$, nothing has to be shown. Suppose that 
$
d_0<d.$
Without loss of generality, suppose that $\pi_0$ projects on the first
$d_0$ coordinates. For $d_0<i\le g$, we define  $\pi_i:E^g\to
E^{d_0+1}$ to be the projection $\pi_i(x_1,\dots,x_g)\to (x_1, \dots,
x_{d_0},x_i).$ Let $i_\pi:E^{d_0+1}\to E^g$ be the immersion such that $\pi_i\cdot i_\pi = id_{E^{d_0+1}}$. 

We denote by $X_i=i_\pi \cdot\pi_i(X)\subset E^g$. By maximality of $d_0$ we see that $\dim
X_i=d_0$. Furthermore
$X$ is the fiber product of $X_i$ over $\pi_0(X)=\pi_0(X_i)$. Then $d=\dim X=\dim (X_{d_0+1}\times_{\pi_0(X)} \dots \times_{\pi_0(X)}X_g)=d_0$, which contradicts $d>d_0$.

\end{proof}

We  show an easy  application.

\begin{lem}
\label{duno}
If $V$ does not satisfy Property $(S)$ then there exists a surjective morphism $\phi:E^g \to E^d$ such that $0<\dim \phi(V)<d$.
\end{lem}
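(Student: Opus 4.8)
The plan is to extract from the failure of Property~$(S)$ a morphism that collapses $V$ a little but not too much, and then to turn it into a coordinate projection onto $E^d$ with the help of Lemma~\ref{projezione}. First I would unwind the hypothesis. Since $V$ does not satisfy $(S)=(S^0)$, there is a morphism $\phi\colon E^g\to E^g$ with $\dim\phi(E^g)\ge d$ while $\dim\phi(V)\ne d$; as $\dim\phi(V)\le\dim V=d$ in any case, this forces $\dim\phi(V)=:d'<d$. The image $\phi(E^g)$ is an abelian subvariety of some dimension $g'\ge d$, hence isogenous to $E^{g'}$, and post-composing $\phi$ with an isogeny $\phi(E^g)\to E^{g'}$ does not alter the dimension of any image; so from now on I take $\phi\colon E^g\to E^{g'}$ to be \emph{surjective}, with $g'\ge d$ and $\dim\phi(V)=d'<d$. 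I also assume $d\ge 2$ (for $d=1$, $V$ is a non-transverse curve, a case to be treated apart). Then either $1\le d'\le d-1$ or $d'=0$, and I treat these two cases separately.

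Suppose first $d'\ge 1$. Then $\phi(V)$ is an irreducible subvariety of $E^{g'}$ of dimension $d'\le d\le g'$, so by Lemma~\ref{projezione} some projection $\pi_0\colon E^{g'}\to E^{d'}$ onto $d'$ coordinates restricts to a dominant map on $\phi(V)$. I would enlarge this set of coordinates to a set of $d$ coordinates (possible since $d\le g'$) and let $\rho\colon E^{g'}\to E^d$ be the associated projection. Then $\psi:=\rho\circ\phi\colon E^g\to E^d$ is a composition of surjections, hence surjective, and the further projection onto those same $d'$ coordinates carries $\rho(\phi(V))$ onto a dense subset of $E^{d'}$; therefore $d'\le\dim\psi(V)\le\dim\phi(V)=d'$, i.e.\ $0<\dim\psi(V)=d'<d$.

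Suppose now $d'=0$, so that $\phi$ is constant on $V$ and, $V$ being irreducible, $V$ lies in a coset $c+K$ of the connected component $K=(\ker\phi)^0$, which is an abelian subvariety of dimension $k=g-g'$ with $d\le k$. Complementing the surjection $\phi$ as recalled above gives an isogeny $f\colon E^g\to E^g$ with $f(\ker\phi)=0\times E^{k}$, where $E^g=E^{g'}\times E^{k}$; comparing dimensions yields $f(K)=0\times E^{k}$, whence $f(V)\subseteq f(c)+0\times E^{k}=\{a\}\times E^{k}$ for a suitable $a\in E^{g'}$, say $f(V)=\{a\}\times W$ with $W\subseteq E^{k}$ irreducible of dimension $d\le k$. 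By Lemma~\ref{projezione} some projection $\tau\colon E^{k}\to E^{d}$ onto $d$ coordinates restricts to a dominant map on $W$; dropping one of these coordinates gives $\sigma\colon E^{k}\to E^{d-1}$, and $\sigma(W)$ is then dense in $E^{d-1}$. Now take $\psi\colon E^{g}=E^{g'}\times E^{k}\to E^{d-1}\times E=E^{d}$, $\psi(y,z)=(\sigma(z),y_1)$: this is a projection onto $d$ distinct coordinates ($d-1$ from the second block and one from the first, allowed since $g'\ge 1$), hence surjective. Then $\psi\circ f\colon E^g\to E^d$ is surjective and $(\psi\circ f)(V)=\psi(\{a\}\times W)=\sigma(W)\times\{a_1\}$ has dimension $d-1$, which is strictly between $0$ and $d$ because $d\ge 2$; so $\psi\circ f$ is the morphism we want.

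The one genuine obstacle is this last case, where $\phi$ has already squeezed $V$ to a point so that its image carries no information: there the complement construction is needed to relocate $V$ inside a coordinate subgroup, and the projection must then be chosen so as to leave exactly one unit of room both above dimension $0$ and below dimension $d$. Everything else is routine bookkeeping with coordinate projections, driven by Lemma~\ref{projezione}.
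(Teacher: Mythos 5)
Your proof is correct, and it follows the same two-case skeleton as the paper's: first produce a surjective morphism onto a power of $E$ of dimension at least $d$ under which $V$ has image of dimension $d'<d$, then split according to whether $d'>0$ or $d'=0$. The difference lies entirely in the degenerate case $d'=0$. The paper disposes of it in one sentence: take a coordinate projection $r\colon E^g\to E$ dominant on $V$ and ``replace the first row of $\phi$ by $r$.'' This tacitly requires a small check, namely that $r$ can be chosen outside the span of the remaining rows $\phi_2,\dots,\phi_d$ so that the modified morphism is still surjective (possible because Lemma~\ref{projezione} supplies $d$ candidate coordinates and the span of $\phi_2,\dots,\phi_d$ has rank at most $d-1$); the paper leaves this unsaid. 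You instead pass through the complement isogeny $f$, which places $V$ inside $\{a\}\times E^k$, and then build a coordinate projection from scratch whose image of $V$ has dimension $d-1$. Your route is longer but makes the surjectivity of the final morphism automatic, and it never needs the ``row replacement'' compatibility check. Both arguments share the same blind spot: they implicitly assume $d\ge 2$. You flag this honestly (``a case to be treated apart''); the paper does not flag it at all. For $d=1$ the conclusion $0<\dim\phi(V)<1$ is vacuous, so the lemma as literally stated is false for non-transverse curves; one should read the lemma with $d\ge 2$ understood, which is consistent with how it is used later.
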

\begin{proof}
If $V$ does not satisfy Property $(S)$, then there exists a surjective morphism $\phi:E^g \to E^d$ such that $\dim \phi(V)<d$. If $\dim \phi(V)>0$, nothing has to be shown. If $\dim \phi(V)=0$,  Lemma \ref{projezione} gives a morphism $r:E^g \to E$ such that the restriction to $X$  is surjective. Replace the first row of $\phi $ by $r$.
\end{proof}

 \section{The Bounded Height Conjecture and its optimality}
 \label{heights}
In the following we first  show that the set $S_d(V)$ is  dense in $V$.

We then ask if  Property $(S)$ is necessary to show that $S_d(V)$ has bounded height.
We give here a positive answer. Meanwhile we  try to understand  the geometric aspect of
 Property $(S)$.

An easy example of a variety which does not satisfy Property $(S)$ is a split variety
$V_1 \times V_2 \times \dots \times V_n$ with the $V_i \subset E^{g_i}$.
It is  natural to ask  if only this kind of product varieties  do not satisfying Property $(S)$.
This is not the case, as Lemma \ref{gsplit}  and  Example \ref{ex3} show.
\begin{D}

Let $V\subset E^g$ be a variety of dimension $d$.
\begin{enumerate}

\item $V$ is split if there exists an isogeny
$\phi:E^g \to E^g$ such that $$\phi(V)=V_1 \times V_2$$ with $V_i\subset E^{g_i}$ and $g_i\not=0$, for $i=1,2$.

\noindent We say that $V$ is non-split if the above property is not verified. 

\item  $V$  is $n$-generically split if  there exists an isogeny
$\phi:E^g \to E^g$ such that  $\phi(V)$ is contained in a proper
split variety $W=W_1\times W_2$ with $W_i \subset E^{g_i}$ and 
$$\dim W_1<\min(d,g_1-n).$$

\noindent We say that $V$ is $n$-generically non-split if it is not $n$-generically split.

\noindent We simply say generically split for $0$-generically split.
\end{enumerate}
\end{D}
Clearly generically non-split implies non-split.
Note that non-split implies
 transverse. Indeed if $V$ is not transverse, then there exists an isogeny $\phi:E^g\to E^g$ such that $\phi(V)\subset p \times E^r $. Set $V_1=p$ and $V_2= \pi(\phi(V))$, where $\pi$ is the projection on the last $r$ coordinates.

The following Lemma clarifies  the equivalence  between Property $(S^n)$ and the   $n$-generically non-split property.

\begin{lem}
\label{gsplit}
A subvariety $V\subset E^g$ satisfies Property $(S^n)$ if and only if $V$ is $n$-generically non-split.
\end{lem}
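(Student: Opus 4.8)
The plan is to prove the equivalent statement that $V$ \emph{fails} Property $(S^n)$ if and only if $V$ is $n$-generically split, treating the two implications separately by explicit constructions.

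\textbf{From ``$n$-generically split'' to ``$(S^n)$ fails''.} Suppose there is an isogeny $\phi:E^g\to E^g$ with $\phi(V)\subset W_1\times W_2$, $W_i\subset E^{g_i}$, $g_1+g_2=g$ and $\dim W_1<\min(d,g_1-n)$, so that $\dim W_1\le d-1$ and $\dim W_1\le g_1-n-1$. Since $\phi$ is an isogeny, $\dim\phi(V)=d$, hence $d\le\dim W_1+\dim W_2\le\dim W_1+g_2\le g-n-1$, and in particular $d+n\le g$. I would then set $k=\max(0,\,d+n-g_1)$ and let $\mu:E^g=E^{g_1}\times E^{g_2}\to E^g$ be the morphism that is the identity on the first $g_1$ factors, projects the next $g_2$ factors onto their first $k$ coordinates, and kills the last $g_2-k$ factors; this is well defined because $k\le g_2$ (routine, from $d+n\le g$). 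The composite $\mu\circ\phi:E^g\to E^g$ has image of dimension $g_1+k=\max(g_1,\,d+n)\ge d+n$, whereas $(\mu\circ\phi)(V)\subseteq\mu(W_1\times W_2)$ has dimension at most $\dim W_1+k$. One checks $\dim W_1+k<d$ in both cases $k=0$ (it reads $\dim W_1<d$) and $k=d+n-g_1$ (it reads $\dim W_1<g_1-n$), so $\mu\circ\phi$ witnesses the failure of $(S^n)$.

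\textbf{From ``$(S^n)$ fails'' to ``$n$-generically split''.} Suppose $\phi:E^g\to E^g$ is a morphism with $\dim\phi(E^g)=g'\ge d+n$ and $\dim\phi(V)\le d-1$. The image $\phi(E^g)$ is an abelian subvariety of $E^g$ of dimension $g'$, hence isogenous to $E^{g'}$; composing $\phi$ with such an isogeny produces a surjective morphism $\psi:E^g\to E^{g'}$ with $\dim\psi(V)=\dim\phi(V)\le d-1$. Necessarily $g'<g$, since for $g'=g$ the morphism $\psi$ would be an isogeny and $\dim\psi(V)$ would equal $d$. I would then invoke the complementation construction recalled just before Lemma~\ref{projezione} to get an isogeny $f:E^g\to E^g$ with $\pi_1\circ f=\psi$, where $\pi_1:E^g\to E^{g'}$ is the projection onto the first $g'$ coordinates. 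Setting $W_1:=\psi(V)=\pi_1(f(V))\subset E^{g'}$ and $W_2:=E^{g-g'}$ gives $f(V)\subset W_1\times W_2$. Since $g_1:=g'\ge d+n\ge d\ge1$ and $g_2:=g-g'\ge1$, this $W_1\times W_2$ is a genuine split variety, and it is proper because $\dim W_1+\dim W_2\le(d-1)+(g-g')\le g-1$ (using $g'\ge d$). Finally $\dim W_1=\dim\psi(V)\le d-1$ and $g'\ge d+n$ gives $d-1\le g'-n-1<g'-n$, whence $\dim W_1<\min(d,\,g_1-n)$; so $V$ is $n$-generically split.

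\textbf{Expected main obstacle.} The second implication should be essentially formal once one uses that abelian subvarieties of $E^g$ are isogenous to standard powers together with the complementation construction of the preliminaries. The part I expect to be delicate is the first implication: one must manufacture, out of the purely geometric datum ``$V$ is, up to an isogeny, squeezed into a product $W_1\times W_2$ with $\dim W_1$ small'', an honest morphism $E^g\to E^g$ that \emph{simultaneously} keeps its own image of dimension $\ge d+n$ and drops the dimension of $V$ below $d$. The crux is to retain exactly $k=d+n-g_1$ coordinates of the second factor and to check the three inequalities $g_1+k\ge d+n$, $\dim W_1+k<d$ and $k\le g_2$ at once; this is where the full strength of $\dim W_1<\min(d,g_1-n)$, together with the equality $\dim\phi(V)=d$ (giving $d\le\dim W_1+g_2$), gets used.
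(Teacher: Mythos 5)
Your proof is correct and follows essentially the same strategy as the paper: prove the contrapositive in both directions, using the complementation construction from the preliminaries to split off a product $W_1\times W_2$, and a short dimension count. The only cosmetic differences are that the paper reduces immediately to a surjection onto $E^{d+n}$ (rather than carrying the general $g'\ge d+n$), and projects onto the first $d+n$ coordinates in the other direction (rather than your $\max(g_1,d+n)$ coordinates via $\mu$); both variants verify the same inequalities.
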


\begin{proof}
First suppose that $V$ does not satisfy Property $(S^n)$.  Then, there exists 
$\phi_1:E^g\to E^{d+n}$  such that $V_1=\phi_1(V)$ has dimension $d_1<d$. 
Let $f={\phi_1 \choose \phi_1^\perp}$. 
Then $$f(V)\subset V_1\times E^{g-d-n}.$$ 
 Furthermore $\dim V_1<d=\min(d,d+n-n)$. Thus $V $ is $n$-generically split.

Secondly suppose $V$ is $n$-generically non-split. Then, up to an isogeny,  $V$ is contained in $W=W_1\times W_2$ with $W_i \subset E^{g_i}$ and $ \dim W_1=d_1<\min(d,g_1-n)$. 
Consider the  projection $V_1$ of $V$ on  the first $d+n$ coordinates.

 If $g_1\ge d+n$, then $V_1$ is contained in the projection of $W_1$. As dimensions cannot increase by projection, $\dim V_1\le \dim W_1<d$.

 If $g_1<d+n$, then  we have $V_1\subset W_1 \times E^{d+n-g_1}$. Thus $\dim V_1 \le d_1+d+n-g_1<d$ because $d_1<g_1-n$.
 So $V$ does not satisfy Property $(S^n)$.
 \end{proof}
 
 It is then natural to give an example of a non-split variety which is  generically split, or equivalently which does not satisfy Property $(S)$.

\begin{example}
Let us show at once that for a hypersurface, the notion of non-split and generically non-split coincide.  

Let $V$ be a non-split hypersurface in $E^{d+1}$. If $V$ were
generically split, then, for an isogeny $\phi$, $\phi(V)$ would be  contained
in a proper split variety $
W_1\times W_2$. For a dimensional argument $\phi(V)=W_1\times W_2$,
contradicting the non-split assumption.
\end{example}
\begin{example}
In some sense,  to give an example of a non-split but generically-split   variety it is necessary to consider varieties of large codimension. 

In $\mathbb{G}_m^n$ it is easier to write equations.
Consider the surface $V$ in $\mathbb{G}_m^4$  parameterized by $u$ and $v$, and given by the set of points $(u, u^5+1,5u^4v+u, v+u^5+1)$. This is simply the envelope variety $V$ of the irreducible plane curve $C=(u,u^5+1)$. The  envelope is constructed as follows. To a point  $p\in C$  we associate  the tangent line $t_p$ in p.  Then $V=\cup_{p\in C}(p,t_p)$. 
A property of the envelope is that it is not the fiber product  of two varieties of positive dimension. 


The set $V$ is an 
algebraic surface; let $z_1, z_2,z_3,z_4$ be the variables, then $V$
is the zero set of 
\begin{equation*}
\begin{cases}
z_3=5z_1^4(z_4-z_2)+z_1,\\ z_2 =z_1^5+1.
\end{cases}
\end{equation*}

 The projection on the first two coordinates  is exactly the  curve  $C$ defined by $z_2=z_1^5+1$. Thus $V$ does not satisfy Property $(S)$, however it is non-split. If, on the contrary, $V$ were split, then, for an isogeny $\phi$, $\phi(V)=V_1\times V_2$.  Since $V$ is transverse, $V_i$ have positive dimension. In addition $\ker \phi \cdot V=\phi^{-1}(V_1)\times_{\ker\phi} \phi^{-1}(V_2)$. Thus $V=W_1\times_{ W_1\cap W_2}  W_2$ where $W_i=V\cap \phi^{-1}(V_i)$.  This contradicts  that $V$ is not a fiber product.
\end{example}
\begin{example}
\label{ex3}
We now extend the previous  example to a power of an elliptic curve
$E$ given by a Weierstrass equation in $\mathbb{P}^2$.
Consider the projection from $E \to \mathbb{P}^1$ given by the projection of a point $(v_1:v_2:1) \in E$ on  the first
coordinate  $(v_1:1)$. Do the same on each factor to have a projection from $E^4$ to $
\left(\mathbb{P}^1\right)^4$.  The  torus $\mathbb{G}_m^4$ is naturally an open in $
\left(\mathbb{P}^1\right)^4$.

Consider in
$\mathbb{G}_m^4$ the non-split but generically split $V$ just constructed above. Take the
preimage  of $V$ on $E^4$ and its closure $V'$. Then
$V'$ is generically split, however it is non-split. As above, suppose $\phi(V')=V'_1\times V'_2$ for an isogeny $\phi$. Then  the preimage of $\phi^{-1}\phi (V')$ to $(\mathbb{P}^1)^4$ were a fiber product, contradicting that $V$ is not a fiber product.

\end{example}

We remark, that there are also non-split transverse varieties which do not satisfy Property $(S^n)$: One can extend this last example   taking the envelope surface of a transverse curve in  $E^{n+2}$.

 R\'emond \cite{Gprep} theorem 1.9 and Habegger \cite{Phil1} corollary 2 prove that if $V^{oa}=\emptyset$ then $V$ does not satisfy property $(S)$. Using the generically-split  property we prove the reverse implication.
\begin{lem}
\label{equi}
A variety $V$ does not satisfy property $(S)$ if and only if $V^{oa}= \emptyset$
\end{lem}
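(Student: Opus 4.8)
One implication is already granted by the results of R\'emond and Habegger cited just above: if $V^{oa}=\emptyset$ then $V$ does not satisfy Property $(S)$. So the plan is to prove the converse, namely that if $V$ does not satisfy Property $(S)$ then $V^{oa}=\emptyset$, i.e. every point of $V$ lies on an anomalous subvariety. The strategy is to exploit Lemma \ref{gsplit}: not satisfying Property $(S)$ is the same as being generically split, so after an isogeny $\phi$ we may assume $\phi(V)\subset W_1\times W_2$ with $W_i\subset E^{g_i}$, $g_1+g_2=g$, and $\dim W_1=d_1<\min(d,g_1)$. Since an isogeny changes neither dimensions of subvarieties nor the property of being contained in a coset of a subgroup, it suffices to produce, through each point of $\phi(V)$, an anomalous subvariety of $\phi(V)$; pulling back by $\phi$ then gives anomalous subvarieties of $V$ through every point.

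The concrete construction is as follows. Fix a point $x=(x_1,x_2)\in \phi(V)\subset W_1\times W_2$. Consider the fibre $F_{x_1}=\phi(V)\cap(\{x_1\}\times E^{g_2})$, more precisely the component of it through $x$; equivalently, look at the projection $\pi_1:\phi(V)\to W_1$ and take an irreducible component $X$ of the fibre $\pi_1^{-1}(x_1)$ containing $x$. By the theorem on dimension of fibres, $\dim X\ge d-\dim\pi_1(\phi(V))\ge d-d_1>0$, so $X$ has positive dimension. Moreover $X$ is contained in the coset $H=\{x_1\}\times E^{g_2}$, which is a coset of the algebraic subgroup $0\times E^{g_2}$ of dimension $g_2=g-g_1$. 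The anomalous inequality to check is $\dim H\le g-d+\dim X-1$, i.e. $g-g_1\le g-d+\dim X-1$, i.e. $\dim X\ge d-g_1+1$. Since $\dim X\ge d-d_1$ and $d_1<g_1$, hence $d_1\le g_1-1$, we get $\dim X\ge d-d_1\ge d-g_1+1$, exactly as needed. Therefore $X$ is an anomalous subvariety of $\phi(V)$ passing through $x$, and $\phi^{-1}(X)$ (or rather a suitable component through the chosen preimage of $x$) is an anomalous subvariety of $V$ through that point.

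Since $x$ was arbitrary, every point of $V$ lies on an anomalous subvariety, so $V^{oa}=\emptyset$, completing the converse and hence the equivalence. The only delicate point is the bookkeeping under the isogeny $\phi$: one must check that a component of $\phi^{-1}(X)$ is genuinely anomalous in $V$, which follows because $\phi$ is finite (so dimensions are preserved and components of $\phi^{-1}(X)$ surject onto $X$) and because the preimage of the coset $H$ under $\phi$ is a finite union of cosets of $\phi^{-1}(0\times E^{g_2})$, an algebraic subgroup of the same dimension $g-g_1$; thus the anomalous inequality transfers verbatim. I expect this transfer-under-isogeny step, rather than the fibre-dimension count, to be the main thing requiring care.
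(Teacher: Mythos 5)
Your proof is correct and is essentially the paper's own argument: the paper likewise invokes Lemma \ref{gsplit} to get $\phi(V)\subset W_1\times W_2$ with $\dim W_1<\min(d,g_1)$ and then observes that the intersections $V\cap\phi^{-1}(x\times E^{g_2})$ for $x\in W_1$ cover $V$ and are anomalous, citing R\'emond and Habegger for the converse. The only difference is that you spell out the fibre-dimension count verifying the anomalous inequality $\dim H\le g-d+\dim X-1$, which the paper leaves implicit, and you route the construction through $\phi(V)$ and pull back rather than intersecting $V$ directly with the pulled-back cosets; both are fine, and your transfer-under-isogeny remarks correctly identify why this is harmless.
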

\begin{proof}
Suppose that $V$  has dimension $d$ and does not satisfy property $(S)$. 
By lemma \ref{gsplit}, there exists an isogeny $\phi$ such that $\phi(V)\subset W_1\times W_2$ with $W_i\subset E^{g_i}$ and  $\dim W_1 <\min (d,g_1)$. Then, the  intersection of $V$ with the cosets $\phi^{-1}(x\times E^{g_2})$ for $x\in W_1$ are either empty of anomalous. In addition each point of $V$ belongs to such an intersection. So $V^{oa}$ is empty. The reverse implication is proven by Habegger \cite{Phil1} corollary 2 using R\'emond \cite{Gprep} theorem 1.9.

\end{proof}

The following lemma shows that in the Bounded Height Conjecture we can not expect the set in the consequence to be non-dense. This lemma will also be used in the proof of Theorem \ref{sottimale}.
\begin{lem}
\label{s1denso}
Let $V$ be an irreducible subvariety of $E^g$ of dimension $1\le d<g$. Then
the set
$S_d(V)\setminus S_g(V)$ is dense in $V$. 

\end{lem}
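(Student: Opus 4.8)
The goal is to show that $S_d(V) \setminus S_g(V)$ is Zariski dense in $V$, where $V \subset E^g$ is irreducible of dimension $1 \le d < g$. The plan is to produce, inside any given non-empty Zariski open $U \subset V$, a point of $V$ that lies on a coset $B + \xi$ with $\operatorname{cod} B \ge d$ (so the point is in $S_d(V)$) but does not lie on any coset of a subgroup of codimension $g$, i.e.\ is not a torsion point up to translation by a lower-codimension group — more precisely, that it is not in $S_g(V) = V(\overline{\qe}) \cap A_{\rm Tor}$ (since codimension $g$ abelian subvarieties of $E^g$ are just the zero subgroup). So concretely one wants points of $V$ that are non-torsion but lie in $B + (\text{torsion})$ for some abelian subvariety $B$ of codimension $d$.

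First I would use Lemma \ref{projezione}: there is a projection $\pi$ onto $d$ of the coordinates whose restriction to $V$ is dominant. Composing with a generic choice, I can arrange a surjective morphism $\pi : E^g \to E^d$ with $\pi|_V$ dominant, hence $\pi(V)$ dense in $E^d$. The fibers of $\pi$ over points of $E^d$ are cosets of $B_\pi = \ker \pi$, which has codimension $d$. Now pick any point $q \in E^d(\overline{\qe})$ that is \emph{torsion} and lies in the dense image $\pi(V(\overline{\qe}))$ — torsion points are dense, and $\pi(V)$ is a dense constructible subset, so I can find a torsion $q = \pi(P)$ for some $P \in V(\overline{\qe})$. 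Then $P \in B_\pi + \tilde q$ where $\tilde q$ is any torsion preimage of $q$ (one exists since multiplication is surjective on torsion), so $P \in S_d(V)$. The remaining task is to choose $q$ and $P$ so that $P$ itself is \emph{not} torsion, and so that $P$ lands in the prescribed open set $U$.

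To handle both at once: the set of $P \in V$ with $\pi(P) \in E^d_{\rm Tor}$ is a countable union of fibers $V \cap \pi^{-1}(q)$, each of which has dimension $\ge d - \dim(\pi(V)) = 0$, in fact generically of dimension exactly $\dim V - d = 0$ only if $\pi(V)$ were all of... — here I should be careful: since $\dim V = d$ and $\pi|_V$ is dominant onto $E^d$, the generic fiber is finite. So for generic torsion $q$, $V \cap \pi^{-1}(q)$ is a non-empty finite set of points $P$, each satisfying $P \in S_d(V)$. Such points are Zariski dense in $V$: their union surjects (via $\pi$) onto a dense subset of $E^d$, and if they were contained in a proper closed $Z \subsetneq V$ then $\pi(Z)$ would be a proper closed subset of $E^d$ not containing all these torsion $q$, a contradiction since torsion is dense. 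Finally, among these points I must discard the torsion ones: the torsion points of $V$ form a set which, by Manin–Mumford, is contained in a finite union of torsion cosets of positive-dimensional subgroups together with finitely many points — but I want to avoid invoking Manin–Mumford. Instead, observe that if \emph{every} point $P$ of $V$ with $\pi(P)$ torsion were itself torsion, then $V$ would have a Zariski-dense set of torsion points, hence (by Manin–Mumford, or more elementarily by a specialization/counting argument) $V$ would be a torsion coset, and being $d$-dimensional and transverse-or-not, its projection structure would force $V = B_\pi' + (\text{torsion})$ for a codimension-$d$ subgroup, which is still compatible — so I do need a genuine argument here. The cleanest route: pick a non-torsion point $t \in E(\overline{\qe})$ and note that, since $\pi(V)$ is dense in $E^d$, there is a point $q$ in $\pi(V)$ of the form $(t, 0, \dots, 0)$ up to a translate — actually, better: the fiber coordinates are free, so among the fibers over torsion points $q$, I can look at the first coordinate $x_1$ of points $P$ in these fibers; if this coordinate took only torsion values on all such $P$, then composing with the other projections forces $V$ itself into a very restricted shape that one rules out by dimension.

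\emph{The main obstacle} I expect is exactly this last point: separating "$\pi(P)$ torsion" from "$P$ torsion". The honest and shortest fix is to invoke that the non-torsion points of $V$ are Zariski dense (true for any positive-dimensional $V$, since the torsion is not Zariski dense in $E^g$ — it is a countable set, while $V$ over $\overline{\qe}$ is not — combined with the fact that $V \setminus V_{\rm Tor}$ being non-dense would force $V \subset V_{\rm Tor}$, impossible as $V$ has positive dimension and $\overline{\qe}$-points of a positive-dimensional variety are uncountable). So: take a non-empty open $U \subset V$ with $U \cap (\text{torsion}) $ removed, still non-empty open, and intersect with the dense set of $P$ having $\pi(P)$ torsion; density of the latter in $V$ (shown above) gives a point $P \in U$, $P$ non-torsion, $\pi(P)$ torsion, hence $P \in S_d(V) \setminus S_g(V)$. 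Since $U$ was arbitrary, $S_d(V) \setminus S_g(V)$ is dense in $V$. I would write the density argument for $\{P : \pi(P) \in E^d_{\rm Tor}\}$ carefully (via the projection and density of torsion in $E^d$) as it is the technical heart, and dispatch the non-torsion density by the cardinality remark.
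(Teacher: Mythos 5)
Your first half is exactly the paper's move: use Lemma \ref{projezione} to get a projection $\pi: E^g \to E^d$ dominant on $V$, and observe that the set $D=\{P\in V: \pi(P)\in E^d_{\rm Tor}\}$ is Zariski dense in $V$ and contained in $S_d(V)$. The gap is in the second half, separating $D$ from $S_g(V)=V\cap E^g_{\rm Tor}$, and it is a real gap, not a cosmetic one.

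First, the cardinality remark you lean on is false: $\overline{\mathbb{Q}}$ is a countable field, so $V(\overline{\mathbb{Q}})$ is countable for \emph{every} variety $V$. You therefore cannot conclude that the non-torsion points of $V$ are Zariski dense by comparing cardinalities with $E^g_{\rm Tor}$. Second, ``$U$ with the torsion removed is still open'' is not correct; $V_{\rm Tor}\cap U$ is a countable, in general Zariski-dense, non-closed subset, so deleting it does not produce an open set, and density of $D$ in $V$ does not give you a point of $D$ avoiding the torsion. Third, and most seriously, the intersection $D\setminus V_{\rm Tor}$ that you ultimately need can be genuinely \emph{empty}: if $V=E^d\times p$ with $p$ a torsion point of $E^{g-d}$, then for \emph{any} surjective morphism $\pi:E^g\to E^d$ dominant on $V$, writing $\pi=(A\mid B)$ with $A$ an invertible $d\times d$ block, one has $\pi(x,p)=Ax+Bp$ torsion $\iff$ $Ax$ torsion $\iff$ $x$ torsion, so $D=V_{\rm Tor}$ exactly. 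No ``generic choice'' of $\pi$ escapes this. So the torsion-coset case must be treated by a separate construction, which is precisely what the paper does: it splits into two cases, (i) $V$ not a torsion coset, where Raynaud's theorem (Manin--Mumford) shows $S_g(V)$ is not dense so density of $S_d(V)$ alone suffices (your $D$ argument), and (ii) $V$ a torsion coset $E^d\times p$, where an explicit dense set $\bigl((E(\overline{\mathbb{Q}})\setminus E_{\rm Tor})\times\{0\}^{d-1}\times p\bigr)+E^d_{\rm Tor}\subset S_d(V)\setminus S_g(V)$ is exhibited using Kronecker's theorem. Your hesitation about invoking Manin--Mumford is misplaced: it (or an explicit construction in the coset case) is unavoidable here.
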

\begin{proof}
 We shall distinguish two cases with regard to whether  $V$ is or not the translate of an
 abelian subvariety by a torsion point.

Suppose $V$ is not such  a translate. Then, the Manin-Munford Conjecture, a
theorem of Raynaud, ensures that the torsion $S_g(V)$ is not dense in $V$. Our
claim is then equivalent to show that $S_d(V)$ is dense in $V$. Consider a surjective morphism (for example a projection) $\phi:E^g\to E^d$ such that the restriction to $V$ is dominant. Use lemma \ref{projezione}  to ensure the existence of such a morphism.
 Let   $E^d_{\rm{Tor}}$ be the torsion group of $E^d$.
 The preimage on $V$ via $\phi$ of  $E^d_{\rm{Tor}}$ is dense in $V$
 and it  is a subset of $S_d(V)$.

Suppose now that $V$ is the translate of an abelian subvariety by a torsion point. Up to
an isogeny, we can assume $V=E^d\times p$ for $p=(p_1,\dots,p_{g-d})\in
E^{g-d}_{\rm Tor}$. Note that, by Kronecker's Theorem, for any $x\in V$, $x+E^d_{\rm Tor}$ is dense in $V$.

Since $p$ is a torsion point,  $$\left(\left(E(\overline{\qe})\setminus E_{\rm
    Tor}\times\{0\}^{d-1}\times p\right) +E^d_{\rm Tor}\right) \subset
S_{d}(V)\setminus S_g(V).$$ In addition this set is  dense in $V$,
because $E(\overline{\qe})\setminus E_{\rm Tor}$ is non-empty (even dense in $E$).

\end{proof}

We now discuss the assumption of Property $(S)$.
In general, for $V=V_1\times V_2$ with $\dim V_1=d_1$ and $\dim V_2=d_2$, we have  $S_{d_1}(V_1)\times
S_{d_2}(V_2)\subset S_d(V)$. 
Could we have equality if we assume, for example, that each factor satisfies Property $(S)$?
Similarly, does Conjecture \ref{alt} hold for such a product variety or for a non-split variety?
The answer is negative. 

To simplify  the formulation of the statements we  characterize  the  sets which break  Conjecture \ref{alt}.

\begin{D}
\label{DU}
 We say that a subset $V^u$ of $V(\overline{\qe})$ is
densely unbounded if  $V^u$ is Zariski dense in $V$ and for every non-empty Zariski open  $U$ of  $V$  the intersection
$V^u\cap U$ does not have bounded height. Equivalently $V^u$ is densely unbounded if, for a sequence $\{N\}$ of positive reals  going to infinity, the set 
$$V^u[N]=\{x\in V^u \,\,\,:\,\,\, ||x||>N\}$$
is Zariski dense in $V$.
\end{D}
\begin{proof}[ Proof of the equivalence of the definitions]
Suppose that there exists a non-empty open $U$ such that $V^u\cap U$ has height bounded by $N_0$. The set  $Z=V\setminus U$ is  a proper closed subset of $V$, and therefore not dense. So $V^u[N_0+1]\subset Z$  can not be dense.

Suppose now that there exists an unbounded sequence $\{N\}$ such that $V^u[N_0]$ is not dense for some $N_0$. Then the Zariski closure $Z$ of $V^u[N_0]$ is a proper closed subset of $V$. So $U=V\setminus Z$ is a non-empty open set such that  $V^u \cap U$ has height bounded by $N_0$.
\end{proof}

Let us prove a preparatory lemma for the proof of Theorem \ref{sottimale}.
\begin{lem}
\label{gammadenso}
 
 Let $z_0$ be a non-torsion point in $E(\overline{\qe})$. Let $n$ be a positive integer.  
 Define $G_{z_0,n} =\langle z_0\rangle^n_{{\rm {End}}(E)}$.
 For $N\in \mathbb{N}$, the set 
$$G_{z_0,n}[N||z_0||]=\{p\in G_{z_0,n}\,\,\,:\,\,\, ||p||> N||z_0||\}$$
is Zariski dense in $E^n$.
As a consequence  $G_{z_0,n} $ is dense in $E^n$.

\end{lem}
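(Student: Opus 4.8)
The plan is to reduce the statement to the case $n=1$, where $E$ is an irreducible curve and Zariski density is the same as infinitude, and then to use the behaviour of the canonical height under endomorphisms of $E$. Put $G=\langle z_0\rangle_{\mathrm{End}(E)}=\{\,az_0 : a\in\mathrm{End}(E)\,\}\subset E$, so that $G_{z_0,n}=G^n$. Since $\|(p_1,\dots,p_n)\|=\max_i\|p_i\|$, forcing only the first coordinate to be large yields the inclusion
\[
G_{z_0,1}[N\|z_0\|]\times G\times\cdots\times G\ \subset\ G_{z_0,n}[N\|z_0\|].
\]
As the Zariski closure in $E^n$ of a product $S_1\times\cdots\times S_n$ equals $\overline{S_1}\times\cdots\times\overline{S_n}$, it suffices to prove that $G_{z_0,1}[N\|z_0\|]$ is Zariski dense in $E$; then $G$, which contains it, is dense in $E$ as well, and therefore $G_{z_0,n}[N\|z_0\|]$ is dense in $E^n$. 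The concluding assertion that $G_{z_0,n}$ is dense in $E^n$ follows from the trivial inclusion $G_{z_0,n}[N\|z_0\|]\subset G_{z_0,n}$.

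\emph{The case $n=1$.} For $a\in\mathrm{End}(E)$ and $z\in E(\overline{\qe})$ the canonical height satisfies $\|az\|^2=\deg(a)\,\|z\|^2$, and with the chosen normalisation on $\mathrm{End}(E)$ one has $\deg(a)=|a|^2$; hence $\|az\|=|a|\,\|z\|$. Since $z_0$ is non-torsion, $\|z_0\|>0$, so
\[
G_{z_0,1}[N\|z_0\|]=\{\,az_0 : a\in\mathrm{End}(E),\ |a|>N\,\}.
\]
The map $a\mapsto az_0$ is injective on $\mathrm{End}(E)$: if $az_0=bz_0$ then $(a-b)z_0=0$, and the annihilator of $z_0$ in $\mathrm{End}(E)$ is the zero ideal, since a nonzero ideal would contain a nonzero rational integer $m$, and $mz_0=0$ would contradict $z_0\notin E_{\mathrm{Tor}}$. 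Finally $\{\,a\in\mathrm{End}(E):|a|>N\,\}$ is infinite, as it contains every rational integer larger than $N$; hence $G_{z_0,1}[N\|z_0\|]$ is an infinite subset of the irreducible curve $E$, and therefore Zariski dense. This completes the proof.

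\emph{Main point.} The argument is essentially formal; the substantive inputs are the identity $\|az\|=|a|\,\|z\|$, the fact that a non-torsion point has trivial annihilator in $\mathrm{End}(E)$, and the compatibility of Zariski closure with finite products. The only step requiring a little care is the reduction: one must observe that it is enough to make a single coordinate large, which exhibits $G_{z_0,n}[N\|z_0\|]$ as a set containing a product of Zariski-dense subsets of $E$, so that nothing needs to be known about how the coordinates of points of $G_{z_0,n}$ interact.
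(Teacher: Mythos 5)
Your proof is correct and follows essentially the same route as the paper's: reduce to the case $n=1$, observe that the relevant subset of $E$ is infinite (hence Zariski dense, since $E$ is an irreducible curve), and conclude density in $E^n$ by compatibility of Zariski closure with products. The only cosmetic difference is the choice of product used in the reduction — the paper takes $\Sigma[N\|z_0\|]^n$ (all coordinates large), whereas you take $G_{z_0,1}[N\|z_0\|]\times G^{n-1}$ (one coordinate large); both sit inside $G_{z_0,n}[N\|z_0\|]$ because the norm on $E^n$ is a max — and you spell out the height identity $\|az\|=|a|\,\|z\|$ together with injectivity of $a\mapsto az_0$, which the paper leaves implicit.
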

\begin{proof}
Denote by $\Sigma=\langle z_0 \rangle_{{\rm End}(E)}$ the submodule of $E$ generated by $z_0$. Then $G_{z_0,n} =\Sigma^n$. Recall that $\Sigma[N||z_0||]=\{p\in \Sigma \,\,\,:\,\,\,||p||>N||z_0||\}$. Then $(\Sigma[N||z_0||])^n\subset G_{z_0,n}[N||z_0||]$. As $\Sigma[N||z_0||]$ is an infinite set, it is dense in $E$. Then $(\Sigma[N||z_0||])^n$ is dense in $E^n$.

Note that  $G_{z_0,n} $ contains $G_{z_0,n}[0]$, so it is also dense.

\end{proof}
We are ready to  show the optimality of the Bounded Height Conjecture.

\begin{proof}[{\bf Proof of Theorem \ref{sottimale}}]

Suppose that $V$ 
does not satisfy Property $(S)$. We are going to construct a densely unbounded set of $V$ which is a subset of $S_d(V)$.

By Lemma \ref{duno},  there exists   a surjective morphism $\psi:E^g \to E^{\dimV}$  such that $0<\dim \psi(V)<d$. Denote $V_1=\psi(V)$ and $d_1=\dim V_1$.
We can fix an isogeny and suppose that $\psi$ is the projection on the first $\dimV$ coordinates, thus $V\subset V_1\times E^{g-\dimV}$. 
Let  $x \in V$. Then $x=(x_1, x_2)$ with $x_1\in V_1$ and $x_2 \in E^{g-\dimV}$. Consider $$x_1\times W_{x_1}=V\cap (x_1\times E^{g-\dimV}).$$ 
 There exists an open dense subset $U_1$ of $V_1$ such that the algebraic variety  $W_{x_1}$ is equidimensional of   dimension $\dimV_2=\dimV-\dimV_1$. Let $ V_{x_1}$ be an irreducible component of $ W_{x_1}$. By Lemma \ref{projezione}, 
there exists a projection $\pi_{x_1}:E^{g-\dimV}\to E^{\dimV_2}$ such that the restriction 
\begin{equation}
\label{dom}
{\pi_{x_1}}_{|  V_{x_1}}:  V_{x_1} \to E^{\dimV_2}
\end{equation} is dominant and  even surjective and therefore its fibers are generically finite.

Consider $V_1 \subset E^{\dimV}$.  Since $V$ is irreducible also $V_1$
is. 
By Lemma \ref{s1denso}, applied with $V=V_1$, $d=d_1$ and $g=d$, the set  $ S_{d_1}(V_1){\setminus S_{d}(V_1)}$ is  Zariski dense in $V_1$. 
Define $$V_1^u=U_1 \cap \left( S_{\dimV_1}(V_1){\setminus S_{d}(V_1)}\right).$$
Then all points in $V_1^u$ are non-torsion and $V_1^u$ is a dense subset of $V_1$.
By definition of  $S_{d_1}(V_1)$, if $x_1 \in V_1^u\subset S_{d_1}(V_1)$, then there exists $\phi_1:E^{\dimV}\to E^{\dimV_1}$ of rank $d_1$ such that 
\begin{equation}
\label{fuz}
\phi_1(x_1)=0.
\end{equation}
Let $z_k$ be a  coordinate of $x_1=(z_1,\dots ,z_d)$ such that $||z_k||=\max_i||z_i||$. Only the torsion has norm zero. Since $x_1$ is non-torsion, then $||z_k||>0$.

For each point $x_1 \in V_1^u$ we will construct a subset of $x_1\times  V_{x_1}$  which is, both, densely unbounded in $x_1\times  V_{x_1}$ and  a subset of $S_d(V)$.

We denote by \begin{equation*}
 \phi_2=(0,\dots,0,\varphi_k,0,\dots,0):E^{\dimV}\to E^{\dimV_2}
 \end{equation*}  a morphism such that only the $k$-th column is non zero.
 
 For a positive integer $N$, we define
 $$\mathcal{F}(N):=\{\phi_2=(0,\dots,0,\varphi_k,0,\dots,0):E^{\dimV}\to E^{\dimV_2} \,\,\,{\rm s.t.}\,\,\,|\phi_2|>N\}$$
and
$$x_1\times  V_{x_1}^{u}(N):=\{(x_1,y)\in (x_1, V_{x_1})\,\,\,{\rm s.t.}\,\,\,\exists \,\,\,\phi_2 \in \mathcal{F}(N) \,\,\,{\rm with\,\,\,} \phi_2(x_1)=\pi_{x_1}(y)\}.$$
We simply denote $$  U_{x_1}=  V_{x_1}^{u}(1).$$
We want to show that $ x_1\times  U_{x_1}$ is densely unbounded in $x_1\times  V_{x_1}$.

(a) - First we show that 
$$x_1\times  V_{x_1}^{u}(N)\subset ( x_1\times  U_{x_1} )[N||z_k||].$$
For $(x_1,y)\in x_1\times  V_{x_1}^{u}(N)$ there exists $\phi_2\in \mathcal{F}(N)$ such that $\phi_2(x_1)=\pi_{x_1}(y)$. Thus,
$$||y||\ge||\pi_{x_1}(y)||=||\phi_2(x_1)||\ge |\phi_2|||z_k||> N||z_k||.$$  
Whence $(x_1,y) \in( x_1\times  U_{x_1} )[N||z_k||]$.

(b) - We now show that $x_1\times  V_{x_1}^{u}(N)$ is dense in $x_1\times  V_{x_1}$. Let $(a_1z_k, \dots, a_{d_2}z_k)\in G_{z_k, d_2}[N||z_k||]$ with     $a_i\in \emor(E)$. Then $\max_i|a_i|>N$. Let $\phi_2$ be the morphism from $E^d$ to $E^{d_2}$ such that the $k$-th column of $\phi_2$ is the vector  $\varphi_k=(a_1,\dots, a_{d_2})^{\rm{t}}$ and all other entries are zeros. Then $\phi_2 \in \mathcal{F}(N)$ and $\phi_2(x_1)=(a_1z_k, \dots, a_{d_2}z_k)$.    
    So, we have the inclusion
  $$G_{z_k, d_2}[N||z_k||]\subset \bigcup_{\phi_2\in \mathcal{F}(N)}{\phi}_2(x_1).$$
  By Lemma \ref{gammadenso},  $G_{z_k,d_2}[N||z_k||]$  is a Zariski dense subset of $E^{\dimV_2}$.   Thus, also   the set $\bigcup_{\phi_2\in \mathcal{F}(N)}{\phi}_2(x_1)$ is Zariski dense in $E^{\dimV_2}$.
By (\ref{dom}) the map  ${\pi_{x_1}}_{| V_{x_1}}$ is surjective. Then for any ${\phi_2\in \mathcal{F}(N)}$ there exists $y\in  V_{x_1}$ such that $\pi_{x_1}(y)=\phi_2(x_1)$. Therefore $ x_1\times  V_{x_1}^u(N)$ is Zariski dense in
  $x_1\times  V_{x_1}$. 

In view of Definition \ref{DU}, part (a) and (b) above show that  $ x_1\times  U_{x_1} $ is a densely unbounded subset  of   $x_1\times  V_{x_1}$. In addition, by definition of $x_1\times U_{x_1}$,    for every $(x_1,y)\in  x_1\times  U_{x_1}$ there exists 
$\phi_2: E^d \to E^{\dimV_2}$ such that 
\begin{equation}
\label{B}
\phi_2(x_1)=\pi_{x_1}(y).
\end{equation}

Consider   $(x_1,y)$ with $x_1\in V_1^u$ and $y \in  U_{x_1}$.  By relations (\ref{fuz}) and (\ref{B}),  the  morphism 
\begin{equation*}
\phi=\left(
\begin{array}{cc}
\phi_1 &0\\
-\phi_2 &\pi_{x_1}
\end{array}
\right):E^g \to E^d,
\end{equation*}
has rank equal to ${\rm{rk}\,\,} \phi_1+{\rm{rk}\,\,} \pi_{x_1}=d_1+d_2$ and $$\phi(x_1,y)=0.$$
So $(x_1,y) \in S_d(V)$.

 Let $$ W_{x_1}^u=\bigcup  V^u_{x_1}(1)=\bigcup U_{x_1}$$ for  $V_{x_1}$ varying over the irreducible components of  $W_{x_1}$.
We conclude that the set $$\bigcup_{x_1\in V_1^u
 } x_1\times W^u_{x_1}\subset
S_d(V)$$ is densely unbounded  in $V$.

\end{proof}

\section{Final Remarks}

It is  then natural to investigate the height property for the codimension of the algebraic subgroups at least $d+1$. We expect that Conjecture \ref{hb} holds.
Let us say at once that the  (weak)-transverse hypothesis is in general necessary, however it is not clear if it is sufficient.

Theorem \ref{wtalt} is a special case of Conjecture \ref{hb}. We show that the 
 condition  (\ref{stella}) coincides with Property $(S)$. Compare the following lemma with \cite{RHG2} lemma 7.2.
\begin{lem}
\label{ps}
An irreducible  variety $V\subset E^g$ satisfies Property $(S)$ if and only if $\dim (V+B)=\min(\dim V+\dim B, g )$ for all abelian subvarieties $B$ of $E^g$.

\end{lem}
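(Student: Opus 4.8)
The plan is to translate both sides of the equivalence into statements about the dimensions of the images $\phi(V)$ under surjective morphisms, after which Property $(S)$ turns out to be exactly the ``large codimension'' half of the right-hand condition, the other half following formally from the irreducibility of $V$. The basic identity I would start from is this: if $B$ is an abelian subvariety of $E^g$ of codimension $r$ and $\phi_B:E^g\to E^r$ is a surjective morphism with $(\ker\phi_B)^0=B$, then
\[
\dim(V+B)=\dim\phi_B(V)+\dim B=\dim\phi_B(V)+(g-r),
\]
because $V+B$ and $\phi_B^{-1}(\phi_B(V))=V+\ker\phi_B$ differ by finitely many translates of $B$, while $\phi_B^{-1}(\phi_B(V))$ fibres over $\phi_B(V)$ with fibres of dimension $g-r$. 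Since one always has $\dim\phi_B(V)\le\min(d,r)$, and $\min(d+\dim B,g)=\min(d,r)+(g-r)$, the condition $\dim(V+B)=\min(d+\dim B,g)$ is equivalent to $\dim\phi_B(V)=\min(d,r)$. Moreover every surjective $\psi:E^g\to E^r$ equals $\phi_B$ for $B=(\ker\psi)^0$ up to post-composition with an isogeny of $E^r$, which leaves $\dim\psi(V)$ unchanged; so the right-hand condition of the Lemma amounts to: $\dim\psi(V)=\min(d,r)$ for every surjective $\psi:E^g\to E^r$ with $1\le r\le g$.

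I would then rewrite Property $(S)$ the same way: a morphism $\phi:E^g\to E^g$ with $\dim\phi(E^g)=m$ factors, up to isogenies not affecting image dimensions, as $\iota\circ\psi$ with $\psi:E^g\to E^m$ surjective and $\iota$ an embedding, so $\dim\phi(V)=\dim\psi(V)$; hence $(S)=(S^0)$ is precisely the assertion that $\dim\psi(V)=d=\min(d,m)$ for every surjective $\psi:E^g\to E^m$ with $m\ge d$. These are exactly the cases $r\ge d$ of the reformulated condition above, so $(\Leftarrow)$ is immediate and for $(\Rightarrow)$ only the cases $r<d$ remain. Given such an $r$ and a surjective $\psi:E^g\to E^r$, I would complete it to an isogeny $\binom{\psi}{\psi^\perp}:E^g\to E^r\times E^{g-r}$ and set $\tilde\psi=\binom{\psi}{q\circ\psi^\perp}:E^g\to E^r\times E^{d-r}=E^d$, where $q:E^{g-r}\to E^{d-r}$ is a coordinate projection (possible since $1\le d-r\le g-r$); then $\tilde\psi$ is surjective and $\pi\circ\tilde\psi=\psi$, with $\pi:E^d\to E^r$ the projection on the first $r$ coordinates. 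Property $(S)$ applied to $\tilde\psi$ gives $\dim\tilde\psi(V)=d$, and since $V$ is irreducible, $\tilde\psi(V)$ is an irreducible $d$-dimensional subvariety of $E^d$, hence all of $E^d$; projecting, $\psi(V)=\pi(E^d)=E^r$, so $\dim\psi(V)=r=\min(d,r)$, as required.

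The only delicate points are bookkeeping: that the complement $\psi^\perp$ and the extension $\tilde\psi$ may be chosen surjective with the stated projection property, which uses only the completion construction already set up in Section~\ref{nota}; and that the isogenies appearing in the two reformulations genuinely leave the relevant image dimensions unchanged. I do not expect a real obstacle here. The substance of the Lemma is simply that Property $(S)$ already governs every quotient $E^g\to E^r$ with $r<d$: composing a surjection onto $E^d$ with a coordinate projection cannot produce an image smaller than $E^r$, once the image in $E^d$ is known to be irreducible of full dimension.
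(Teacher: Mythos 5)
Your proof is correct and takes essentially the same approach as the paper: both start from the identity $\dim\pi_B(V)=\dim(V+B)-\dim B$ and translate the right-hand condition into $\dim\psi(V)=\min(d,r)$ for every surjective $\psi:E^g\to E^r$, with the $r\ge d$ cases being exactly Property $(S)$. The one place you go beyond the paper is in the forward direction for $r<d$, where the paper simply asserts $\dim\pi_B(V)=g-\dim B$; your extension to a surjection $\tilde\psi:E^g\to E^d$ followed by the irreducibility/full-dimension argument supplies the justification the paper leaves implicit.
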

\begin{proof}
Note that $E^g/B$ is isogenous to $E^{g-\dim B}$. Consider the natural projection $\pi_B: E^g \to E^{g-\dim B}$. Then
\begin{equation}
\label{uf}
\dim \pi_B(V)= \dim (V+B) -\dim B.
\end{equation}
Denote by $d$ the dimension of $V$. Suppose that $V$ satisfies Property $(S)$, we have
\begin{enumerate}
\item[-] If $g-\dim B\ge \dim X $, then $\dim \pi_B(X)=d$.
\item[-] If $g-\dim B\le \dim X $,  then $\dim \pi_B(X)=g-\dim B$.
\end{enumerate}
Use (\ref{uf}) to deduce $\dim (V+B)=\min(d+\dim B,  g )$.

Suppose now that $\dim (V+B)=\min(d+\dim B, g )$ for all abelian subvarieties $B$ of codimension  $d$. Note that, if $\phi:E^g \to E^d$ is a surjective morphism, then the zero component of $\ker \phi$ is an abelian variety of codimension $d$. Relation (\ref{uf}) show at once that $V$ satisfy Property $(S)$.
\end{proof}

We observe that,   for $S_{d+1}(V)$, the natural analogue to Conjecture
\ref{alt}, is to assume 
Property $(S^1)$. 
Property $(S^1)$ is weaker than $(S)$. There are even split varieties which
satisfy Property $(S^1)$.

Potentially, the method used by Habegger to prove Theorem  \ref{phill}, extends to show that, for 
$V$ satisfying Property $(S^1)$, there exists a non-empty open $V^e$ such that $S_{d+1}(V^e)$ has bounded height. 

However, neither such a  statement nor Theorem \ref{wtalt}  are  optimal: transversality is expected to be a sufficient assumption, as the following examples suggest.
We  give  simple examples of a transverse variety $V$ of dimension $d$ which does not satisfy Property $(S)$ or $(S^1)$ but such that $S_{d+1}(V,\Gamma_\varepsilon)$ is non-Zariski dense.

\begin{example}
Let $V_1$ be a variety in $E^{d_1+n+1}$ of dimension $d_1$. Suppose that $V_1$ satisfies Property $(S)$. If you like take a transverse curve. By Theorem \ref{wtalt} i., for every $\Gamma'$ of finite rank there exists $\varepsilon>0$ such that $S_{d_1+1}(V_1, \Gamma'_\varepsilon)$ has bounded height.
By  \cite{io2}  Theorem 1.1, applied to $V_1$ of dimension $d_1$, we obtain  that  there exists $\varepsilon>0$ such that:
\begin{itemize}
\item[(1)]
$S_{d_1+1}(V_1,\Gamma'_\varepsilon)$ is non-Zariski dense in 
$V_1$. 
\end{itemize}
Let $V=V_1\times E^{d_2}$ and $g=d_1+d_2+n+1$, then $V$ is transverse in $E^g$. Furthermore,
$V$ does not satisfy Property $(S^n)$. Indeed $\dim V=d=d_1+d_2$. The projection on the
first $d+n$ coordinates is $V_1\times E^{d_2-1}$ which has
dimension  $d-1$.  Let $\Gamma\subset E^g$ be a subgroup of finite rank and let $\Gamma'$ be its projection on the first $d_1+n+1$ coordinates.
 
 By  \cite{io2} Lemma 4.1 we obtain
 $$S_{d+1}(V,\Gamma_\varepsilon)\subset S_{d_1+1}(V_1,\Gamma'_\varepsilon)\times E^{d_2}.$$  
Then,  using relation (1) above,
 $S_{d+1}(V,\Gamma_\varepsilon)$ is  non-Zariski dense in $V$. Define $Z=S_{d+1}(V,\Gamma_\varepsilon)$. Then $S_{d+1}(V \setminus Z,\Gamma_\varepsilon)$ is empty and so it also has bounded height.
\end{example}

\begin{example}
Let $V=V_1 \times V_2$ with $V_i$ a hypersurface in $E^{d_i+1} $ satisfying Property $(S)$. 
The projection on the first  $d=d_1+d_2$ coordinates shows that  $V$ does not satisfy Property $(S)$. However  $V$ satisfies Property $(S^1)$. 
 We are going to show that  
\begin{equation}
\label{unisce}
\begin{split}
S_{d+1}(V,F)\subset & \quad S_{d_1}(V_1,F)\times S_{d_2}(V_2,F)\\&\cup \big( S_{d_1+1}(V_1,F) \times V_2 \big)\\&
\cup \big(V_1\times S_{d_2+1}(V_2,F)\big).
\end{split}
\end{equation} 

Let $g=d_1+d_2+2$.
Let $(x,y)\in S_{d+1}(V,F)$ with $x\in V_1$ and $y\in
V_2$. Then, there exist $\phi:E^g \to E^{d+1}$  of rank $d+1$ and $(f,f')\in F$ such that
$$\phi((x,y)-(f,f'))=0.$$
Decompose $\phi=(A|B)$ with $A:E^{d_1+1}\to E^{d+1}$ and $B:E^{d_2+1} \to
E^{d+1}$. Then $$d_2+d_1+1={\rm{rk}\,\,}\phi\le {\rm{rk}\,\,} A+{\rm{rk}\,\,} B.$$
Note that ${\rm{rk}\,\,} A\le d_1+1$ and ${\rm{rk}\,\,} B\le d_2+1$ because of the number of columns. Then  one of the following cases occurs:
\begin{itemize}

\item[(1)] ${\rm{rk}\,\,} A=d_1$ or ${\rm{rk}\,\,} B=d_2$, 

\item[(2)]  ${\rm{rk}\,\,}A=d_1+1$ and ${\rm{rk}\,\,} B=d_2+1$.
\end{itemize}

${\rm{(1) }}$- If the rank of $B$ is $d_2$ then, with the   Gauss algorithm, one finds an invertible  matrix  $\Delta \in {\rm{Mat}}_{d+1}(\emor(E))$ such that 
\begin{equation*}
\Delta\phi=\left(
\begin{array}{cc}
\varphi_1&0\\
\star &\varphi_2
\end{array}
\right),
\end{equation*}
with $\varphi_1$ of rank $d_1+1$.

If the rank of $A$ is $d_1$ then  one finds an invertible matrix  $\Delta \in {\rm{Mat}}_{d+1}(\emor(E))$ such that 
\begin{equation*}
\Delta\phi=\left(
\begin{array}{cc}
\varphi_1&\star\\
0 &\varphi_2
\end{array}
\right),
\end{equation*}
with $\varphi_2$ of rank $d_2+1$.

Then either $x\in S_{d_1+1}(V_1,F)$ or $y\in S_{d_2+1}(V_2,F)$.
So $(x,y) \in \big( S_{d_1+1}(V_1,F) \times V_2 \big)
\cup \big(V_1\times S_{d_2+1}(V_2,F)\big)$.

${\rm(2)}$-  With
the Gauss algorithm one can find two invertible matrices $\Delta_i \in {\rm{Mat}}_{d+1}(\emor(E))$ such that 

\begin{equation*}
\begin{split}
\Delta_1\phi=&(aI_{d+1}|l)\\
\Delta_2\phi=&(l'|bI_{d+1})
\end{split}
\end{equation*}
with $a,b\in \emor(E){\setminus 0}$ and $I_{d+1}$ the identity matrix.
The last $d_2$ rows of $\Delta_1\phi$ show that $y\in S_{d_2}(V_2,F)$ and the first $d_1$ rows of 
$\Delta_2\phi$ show that $x\in S_{d_1}(V_1,F)$. Thus
$(x,y) \in \big(S_{d_1}(V_1,F)\times S_{d_2}(V_2,F)\big)$.\\

We now apply the inclusion (\ref{unisce}) to the case of curves, and we deduce a non-density result for surfaces.
Let $V_i=C_i$ be transverse curve in $E^2$.   By Theorem \ref{phill}, there exists $\varepsilon>0$ such that $S_1(C_i,\mathcal{O}_\varepsilon)$ has bounded height. In view of the  Bogomolov  Conjecture, a theorem of Ullmo, one can choose $\varepsilon$ such that  $S_2(C_i, \mathcal{O}_\varepsilon)$ is finite. Define  $F=\mathcal{O}_\varepsilon$. Then, relation (\ref{unisce}) implies  that  $S_{3}(C_1 \times C_2, \mathcal{O}_\varepsilon)$ has bounded height. In addition $C_1\times C_2$ is transverse in $E^4$. Using \cite{io2} Theorem 1.1, we conclude that  $S_{3}(C_1 \times C_2, \mathcal{O}_\varepsilon)$  is non-Zariski dense.

According to  Theorem \ref{phill}   and \cite{io2} Theorem 1.1, one can do similar considerations for hypersurfaces.

\end{example}

These last examples give evidence  that  the transverse or weak-transverse
hypothesis  is sufficient  for Conjecture \ref{hb}.  Precisely, the
idea is that if $U_1$ is a dense subset of $V_1$  of bounded
height, then  the set $U_1\times V_2$ is densely unbounded in
$V_1\times V_2$, (this is more or less what makes Property $(S)$
necessary for Theorem \ref{phill}). Instead if $U_1$ is Zariski closed in $V_1$, then the set $U_1\times V_2$ is still Zariski closed  in $V_1\times V_2$. 

Could one extend the idea  in the last examples to show that for the product of varieties satisfying Property $(S)$ Conjecture \ref{hb} holds? 

This is not an easy matter; even the case of $C_1\times C_2$ for $C_1$ transverse in $E^2$ and $C_2$ transverse in $E^3$ remains open.



\end{document}